\documentclass[11pt]{article}%
\usepackage{amssymb,amsmath,amsfonts,amsthm,array,bm,bbm,color}%
\setcounter{MaxMatrixCols}{30}%
\usepackage{amsmath}%
\usepackage{amsfonts}%
\usepackage{amssymb}%
\usepackage{graphicx}

\usepackage{algorithm2e}
\usepackage{subcaption}
\usepackage{dsfont}

\providecommand{\U}[1]{\protect\rule{.1in}{.1in}}

\setlength{\hoffset}{-0.4mm} \setlength{\voffset}{-0.4mm}
\setlength{\textwidth}{158mm} \setlength{\textheight}{235mm}
\setlength{\topmargin}{0mm} \setlength{\oddsidemargin}{0mm}
\setlength{\evensidemargin}{0mm} \setlength\arraycolsep{1pt}
\setlength{\headsep}{0mm} \setlength{\headheight}{0mm}

\numberwithin{equation}{section}

\newtheorem{theorem}{Theorem}[section]
\newtheorem{lemma}[theorem]{Lemma}
\newtheorem{corollary}[theorem]{Corollary}
\newtheorem{proposition}[theorem]{Proposition}
\newtheorem{remark}[theorem]{Remark}
\newtheorem{example}[theorem]{Example}

\newtheorem{hypothesis}[theorem]{Hypothesis}

\def\<{\langle}
\def\>{\rangle}
\def\d{{\rm d}}

\def\E{\mathbb{E}}

\def\N{\mathbb{N}}
\def\P{\mathbb{P}}
\def\R{\mathbb{R}}


\newcommand\abs[1]{\left| #1 \right|}
\newcommand\norm[1]{\left\| #1 \right\|}

\begin{document}

\title{A numerical approach to Kolmogorov equation in high dimension based on Gaussian analysis}

\author{Franco Flandoli\footnote{Email: franco.flandoli@sns.it. Scuola Normale Superiore, Piazza dei Cavalieri, 7, 56126 Pisa, Italy.}
\quad Dejun Luo\footnote{Email: luodj@amss.ac.cn. RCSDS, Academy of Mathematics and Systems Science, Chinese Academy of Sciences, Beijing 100190, China, and School of Mathematical Sciences, University of the Chinese Academy of Sciences, Beijing 100049, China.}
\quad Cristiano Ricci\footnote{Email: cristiano.ricci@unifi.it. University of Florence, Italy.}}
\maketitle

\begin{abstract}
For Kolmogorov equations associated to finite dimensional stochastic
differential equations (SDEs) in high dimension, a numerical method
alternative to Monte Carlo simulations is proposed. The structure of the SDE
is inspired by stochastic Partial Differential Equations (SPDE) and thus
contains an underlying Gaussian process which is the key of the algorithm. A
series development of the solution in terms of iterated integrals of the
Gaussian process is given, it is proved to converge - also in the infinite
dimensional limit - and it is numerically tested in a number of examples.
\end{abstract}

\textbf{Keywords:} Kolmogorov equation, numerical solution, iteration scheme, Gaussian process

\section{Introduction}

Kolmogorov equations are parabolic equations with a structure directly related
to stochastic differential equations (SDEs). The SDEs considered here are in a
finite dimensional space but they are inspired by the spatial discretization
of stochastic Partial Differential Equations (SPDE). When the noise is
additive and the nonlinearity is time-independent, a general form of such SDEs
is%
\begin{equation}\label{eq:X^x_t}
\left\{
\aligned
\d X_{t} &  =\left(  AX_{t}+B\left(  X_{t}\right)  \right)  \d t+\sigma\sqrt {Q}\,\d W_{t},\\
X_0 &  =x,
\endaligned \right.
\end{equation}
where $x\in\mathbb{R}^{d}$, $\left(  W_{t}\right)  _{t\geq0}$ is a Brownian
motion in $\mathbb{R}^{d}$ (namely $W_{t}=\left(  W_{t}^{1},\ldots, W_{t}%
^{d}\right)  $ where the $W_{t}^{i}$'s are independent real valued Brownian
motions), defined on a probability space $\left(  \Omega,\mathcal{F}%
,\mathbb{P}\right)  $ with a filtration $\left(  \mathcal{F}_{t}\right)
_{t\geq0}$, $\sigma$ is a positive real number measuring the strength of the
noise, $Q$ is a $d\times d$ positive definite symmetric matrix (the so called
covariance matrix of the noise) describing the spatial structure of the noise
and $\sqrt{Q}$ is its square root, $A$ is a $d\times d$ matrix and
$B:\mathbb{R}^{d}\rightarrow\mathbb{R}^{d}$ is a function with the degree of
regularity specified below. Obviously we could include the scalar $\sigma^{2}$
inside the matrix $Q$ but for certain practical arguments it is useful to
distinguish between them. The solution $X_{t}$ is a continuous
adapted process in $\mathbb{R}^{d}$. The associated Kolmogorov equation is%
\begin{equation}\label{KolEq}
\left\{  \aligned
\partial_{t}u (  t,x )   &  =\frac{\sigma^{2}}{2} {\rm Tr}\left(
QD^{2}u ( t,x)  \right)  +\left\langle Ax+B\left(  x\right)
,Du (  t,x)  \right\rangle, \\
u( 0,x)   &  =u_{0} ( x),
\endaligned \right.
\end{equation}
where $u:[  0,T]  \times\mathbb{R}^{d}\rightarrow\mathbb{R}$,
$Du(  t,x)  $ and $D^{2}u(  t,x)  $ denote respectively
the vector of first partial derivatives and the matrix of second partial
derivatives, ${\rm Tr}\left(  QD^{2}u(  t,x)  \right)  $ is the trace of
the $d\times d$ matrix $QD^{2}u (  t,x )  $ and $\langle
\cdot,\cdot\rangle $ denotes the scalar product in $\mathbb{R}^{d}$.
Both for the SDE and the Kolmogorov equation we have used notations which may
be adapted to the infinite dimensional case, when $\mathbb{R}^{d}$ is replaced
by a Hilbert space (see Section \ref{sec-theory} for the general theory); however, the aim of this work is numerical and all objects
in the introduction will belong to $\mathbb{R}^{d}$. The link between the Kolmogorov equation and
the SDE is%
\[
u (  t,x )  =\mathbb{E}\left[  u_{0} (  X_{t}^{x} )
\right],
\]
where $\mathbb{E}$\ denotes the mathematical expectation on $\left(
\Omega,\mathcal{F},\mathbb{P}\right)  $ and $X_{t}^{x}$ is the solution of the
SDE above, where the initial condition $x$ is explicitly indicated.
Several elements of theory both in finite and infinite dimensions for SDEs and
associated Kolmogorov equations can be found in many books, like
\cite{Cerrai, DaPrato, DaPratoZab2, KarSh, KRZ}.

Solving the Kolmogorov equation with suitable initial condition $u_{0}$ is a
way to compute relevant expected values and probabilities associated to the
solution of\ an SDE. For instance, when $u_{0} (  x )  =1_{\left\{
\left\Vert x\right\Vert >R\right\}  }$, $u (  t,x )  $ is the
probability that the solution exceeds a threshold $R$:
\[
u (  t,x )  =\mathbb{E}\left[  1_{\left\{  \left\Vert x\right\Vert
>R\right\}  }\left(  X_{t}^{x}\right)  \right]  =\mathbb{P}\left(  \left\Vert
X_{t}^{x}\right\Vert >R\right)  .
\]
The classical method of computing these expected values is the Monte Carlo
method (with important variants, see for instance \cite{GNR, Szpruch}):\ several
realizations of the process $X_{t}^{x}$ are simulated by solving the SDE --
typically by Euler method -- and then the corresponding values of $u_{0} (
X_{t}^{x} )  $ are averaged. Going beyond this strategy is a fundamental
issue, due to its limitations in relevant applications like Geophysics and
Climate change projections \cite{Kalnay}, especially concerning extreme events.
The question is whether Kolmogorov equation can be efficiently solved
numerically without using the simulation of the SDE. But the problem is that
the dimension $d$ is extremely high in these examples and common numerical
methods for solution of parabolic equations already require strong
computational power when $d=3$, \cite{Brenner, Kushner}. A grid of $N$ points in $\mathbb{R}$, repeated
for all dimensions, give rise to $N^{d}$ grid points, numerically impossible
when, for instance, $N=10,\, d=10$ (which still would be an extremely poor
approximation). Spectral methods seem to meet the same restrictions:\ $N^{d}$
is the cardinality of basis elements obtained by tensorization of $N$ basis
elements for each space variable.

The problem of dimensionality, the limitations of present methodologies and
several motivations are recalled in two recent works \cite{Jentzen, Jentzen2}
which also aim to go beyond Monte Carlo and propose a method
based on deep artificial neural networks. We address to these brilliant works
for other comments on the problem, see also \cite[Introduction]{ChenMajda}. The approach developed here is however
completely different.

Our aim is to take advantage of the probabilistic structure of the problem to
devise numerical schemes for the Kolmogorov equation, in particular using
Gaussian analysis. We implement a perturbative scheme which links the solution
of Kolmogorov equation to a Gaussian process, the solution $Z_{t}$ of the
linear stochastic equation%
\begin{equation*}
\left\{ \aligned
\d Z_{t} &  =AZ_{t}\,\d t+\sqrt{Q}\,\d W_{t}, \\
Z_0 &  =0. 
\endaligned \right.
\end{equation*}
The idea comes from the theoretical investigations of infinite dimensional
Kolmogorov equations associated to SPDEs, see for instance \cite{DaPratoZab2, DaPratoFla}.
We modify and adapt that idea giving an explicit formula in
terms of a series of Gaussian integrals. We provide here a first glance at the
strategy by writing the final formula:%
\[
u( t,x)  =\sum_{n=0}^{\infty}v^{n} (  t,x),
\]
where%
\[
v^{0} (  t,x)  =\mathbb{E}\left[  u_{0}\left(  e^{tA}x+\sigma
Z_{t} \right)  \right]
\]
and for $n\geq1$%
$$ \aligned
v^{n} (  t,x )  =&\, \int_{0}^{t} \d r_{n}\int_{0}^{r_{n}} \d r_{n-1}\cdots \int_{0}^{r_{2}} \d r_{1} \\
&\ \mathbb{E}\left[  u_{0}\big(  e^{tA}x+\sigma Z_{t} \big)
\prod_{i=1}^{n} \left\langle \Xi_{\sigma} ( r_{i+1}-r_{i} )  B\big(  e^{r_i A}x+\sigma
Z_{r_i} \big)  ,Z_{r_{i+1}}-e^{(  r_{i+1}-r_{i})  A}Z_{r_{i}%
}\right\rangle \right]  .
\endaligned $$
The matrix $\Xi_{\sigma} (t)$ will be defined in the next
sections, see \eqref{sec-2.1}; it is easily computed by $A$ and $Q$, and it depends on the
parameters $t$ and $\sigma$. A theoretical analysis of this series is made,
proving the following result.

\begin{theorem}
Assume that $u_0$ and $B$ are bounded. Then, under suitable conditions on $A$ and $Q$ (see Hypothesis \ref{hypothe} for details), we have the following uniform estimate:
  $$\|v^{n} (t)\|_\infty \leq \|u_0\|_\infty \|B\|_\infty^n C_\delta^n\, t^{n(1-\delta)} \frac{\Gamma(1-\delta)^n} {\Gamma(1+n(1-\delta))}, \quad t>0, $$
where $\Gamma(\cdot)$ is the Gamma function, $C_\delta>0$ is a constant and $\delta\in (0,1)$ the parameter in (iv) of Hypothesis \ref{hypothe}.
\end{theorem}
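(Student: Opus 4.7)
The strategy is to bring absolute values inside the expectation, extract $\|u_0\|_\infty$, and bound the expectation of the remaining product of scalar factors by iterated conditioning. Set $r_{n+1}:=t$ and introduce, for $i=1,\ldots,n$,
$$\Delta_i := Z_{r_{i+1}} - e^{(r_{i+1}-r_i)A}Z_{r_i}, \qquad Y_i := \langle \Xi_\sigma(r_{i+1}-r_i)\,B(e^{r_i A}x+\sigma Z_{r_i}),\,\Delta_i\rangle.$$
The crucial structural observation is that $\Delta_i$ is a centred Gaussian vector with covariance $\Sigma(r_{i+1}-r_i):=\int_0^{r_{i+1}-r_i} e^{uA}Q\,e^{uA^\top}\,du$, independent of $\mathcal{F}_{r_i}$, whereas the argument of $B$ inside $Y_i$ and all preceding factors $Y_1,\ldots,Y_{i-1}$ are $\mathcal{F}_{r_i}$-measurable.

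First, I would apply conditional Cauchy--Schwarz from the outside in. Conditioning on $\mathcal{F}_{r_n}$ and exploiting that $\Delta_n$ is Gaussian and independent of this $\sigma$-algebra,
$$\mathbb{E}\bigl[\,|Y_n|\,\big|\,\mathcal{F}_{r_n}\bigr] \le \sqrt{\mathbb{E}\bigl[Y_n^2 \mid \mathcal{F}_{r_n}\bigr]} = \sqrt{\langle \Xi_\sigma(t-r_n)B,\,\Sigma(t-r_n)\,\Xi_\sigma(t-r_n)B\rangle}.$$
Hypothesis~\ref{hypothe}(iv) is designed precisely to bound this quadratic form by $C_\delta(t-r_n)^{-\delta}\|B\|_\infty$. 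Since this estimate is a deterministic function of $t-r_n$, the tower property combined with the $\mathcal{F}_{r_n}$-measurability of $\prod_{i<n}|Y_i|$ yields a factor $C_\delta(t-r_n)^{-\delta}\|B\|_\infty$ times $\mathbb{E}\bigl[\prod_{i<n}|Y_i|\bigr]$. Iterating the same step at $r_{n-1},r_{n-2},\ldots,r_1$ produces
$$\mathbb{E}\Big[\prod_{i=1}^n |Y_i|\Big] \le C_\delta^n\,\|B\|_\infty^n \prod_{i=1}^n (r_{i+1}-r_i)^{-\delta}.$$

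Second, I would evaluate the remaining deterministic simplex integral
$$I_n(t) := \int_0^t\! dr_n \int_0^{r_n}\! dr_{n-1}\cdots\int_0^{r_2}\! dr_1 \prod_{i=1}^n (r_{i+1}-r_i)^{-\delta}.$$
Induction on $n$, using the Beta identity $\int_0^s (s-r)^{-\delta}r^{\alpha-1}\,dr = \Gamma(1-\delta)\Gamma(\alpha)\,s^{\alpha-\delta}/\Gamma(\alpha+1-\delta)$ applied with $\alpha = 1+k(1-\delta)$ at stage $k$, delivers $I_n(t) = \Gamma(1-\delta)^n\,t^{n(1-\delta)}/\Gamma(1+n(1-\delta))$. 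Combining this with the pointwise bound $|u_0|\le\|u_0\|_\infty$ and the bound on $\mathbb{E}[\prod|Y_i|]$ above yields the announced estimate.

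The main obstacle is really the Gaussian operator estimate encoded in Hypothesis~\ref{hypothe}(iv): one has to know a priori that the structural assumptions on $A$ and $Q$ force the $\Sigma(s)$-seminorm of $\Xi_\sigma(s)B$ to be integrable at $s\downarrow 0$, with the precise rate $s^{-\delta}$ for some $\delta\in(0,1)$. This integrable singularity is exactly what produces the Mittag--Leffler denominator $\Gamma(1+n(1-\delta))$, and hence the summability of $\sum_n v^n$. Once that local bound is granted, the rest is the routine tower-of-conditionings and Dirichlet computation above.
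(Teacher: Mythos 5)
Your proof is correct, and it takes a genuinely different route from the paper's. The paper never estimates the explicit Gaussian formula directly: it proves the bound by induction on the abstract iteration $v^{n+1}(t,x)=\int_0^t (S_{t-s}k^n_s)(x)\,\d s$, using $\|v^{n+1}(t)\|_\infty\le\int_0^t\|k^n_s\|_\infty\,\d s\le\|B\|_\infty\int_0^t\|Dv^n(s)\|_\infty\,\d s$ together with the gradient estimate $\|DS_tf\|_\infty\le\|f\|_\infty\|\Lambda(t)\|_{\mathcal L(H)}$ from Proposition \ref{derivative-formula}; each iteration picks up one factor $\|\Lambda(s_i-s_{i-1})\|_{\mathcal L(H)}\le C_\delta(s_i-s_{i-1})^{-\delta}$, and the same simplex integral you compute is then evaluated in Lemma \ref{lem-technical} by exactly your Beta-function telescoping. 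You instead start from the explicit iterated-integral representation and exploit that, conditionally on $\mathcal F_{r_i}$, the factor $Y_i=\big\langle\Lambda(r_{i+1}-r_i)B(Z^x_{r_i}),Q_{r_{i+1}-r_i}^{-1/2}\Delta_i\big\rangle$ is centred Gaussian with variance $|\Lambda(r_{i+1}-r_i)B(Z^x_{r_i})|_H^2\le C_\delta^2(r_{i+1}-r_i)^{-2\delta}\|B\|_\infty^2$; the outside-in tower argument is sound because $\Delta_i$ is independent of $\mathcal F_{r_i}$ while all earlier factors are $\mathcal F_{r_i}$-measurable. Your route is more elementary and probabilistically transparent (and would even yield the slightly sharper constant $\sqrt{2/\pi}\,C_\delta$ per factor from the exact first absolute moment of a Gaussian), but it presupposes the validity of the representation formula \eqref{iteration-new}, which in the paper is itself a nontrivial Markov-property induction (Theorem \ref{thm-iteration}); the paper's semigroup route avoids that dependency and, as a byproduct, also produces the bounds on $\|Dv^n(t)\|_\infty$ that are needed later for the convergence of the derivative series and the identification of the limit as the mild solution.
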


This theorem sustains the numerical method and stresses the independence on
the dimension of certain issues of the method (obviously others, like getting
a sample of $Z$, have a cost which increases with $d$). When $\mathbb{R}^{d}$
is replaced by a Hilbert space $H$ (and below we shall formulate the theorem
with assumptions in a Hilbert space) it contains also some theoretical
novelties with respect to the literature, especially because it provides an
explicit formula.

The numerical evaluation of the terms $v^{n}( t,x)  $ is made
here, in this paper, by Monte Carlo method based on a sample of the process
$Z_{t}$ obtained by solving the linear SDE by Euler method. These are the most
obvious choices, but other possibilities exist, since $\left(  Z_{t}\right)
_{t\geq0}$ is a centered Gaussian process with known covariance function. A
main strategy invoked here is to store once for ever a large and accurate
sample of $\left(  Z_{t}\right)  _{t\geq0}$ (this requires the pair $\left(
A,Q\right)  $ to be given) and use it later in the formula for different
values of the other parameters, $t,x,\sigma,u_{0}$ and even $B$.

This new method is aimed to replace direct Monte Carlo simulations. We should
therefore accurately compare them. If the purpose is to make one single
computation, classical Monte Carlo wins: the Gaussian method above still
requires Monte Carlo simulations of the linear problem, which is less
expensive than the nonlinear one but then one has to compute possibly several
terms $v^{n} (  t,x)  $;\ some experiments clearly show that
classical Monte Carlo is less expensive for a comparable degree of precision.
The advantage comes when we want to vary parameters, since the Gaussian method
for given $\left(  A,Q\right)  $ allows to store a possibly expensive sample
of the process $Z_{t}$ and reuse it for several values of the parameters,
just having to compute the averages over the Gaussian sample which give us the
terms $v^{n} (  t,x )  $. On the contrary, classical Monte Carlo
method requires to repeat the simulation of the nonlinear problem for each new
value of the parameters. By ``parameters'', as we have already mentioned above,
we mean $t,x,\sigma,u_{0},B$. Let us comment on the interest in changing them.

The interest in changing $t$ is obvious. In certain applications it is
necessary to change the initial condition $x$ and compare or collect the
results. We have in mind for instance the ensemble methods used in weather
prediction where the initial condition is uncertain, a first guess is made on
the basis of physical observations, but then the initial condition is
perturbed in various directions and the final results averaged by suitable
methods. See also \cite{Jentzen, Jentzen2}, where the need to change
$\left(  t,x\right)  $ is stressed.

Changing the strength $\sigma$ of the noise is a very important issue, related
also to Large Deviation Theory. We have to advise that the precision of our
simulations degenerates as $\sigma\rightarrow0$, or the number of iterates
needed to maintain a reasonable precision blows-up, but at least one can
detect some tendency by moving $\sigma$ in a finite range without arriving to
too small values.

Concerning the change of function $u_{0}$, unfortunately the main comment is
in favor of Monte Carlo:\ having at disposal a sample of the process
$X_{t}^{x}$ immediately gives a way to compute $\mathbb{E}\left[  u_{0} (
X_{t}^{x} )  \right]  $ for different functions $u_{0}$. Hence the best
we can say on this issue is that our formula allows for such computations with
a moderate additional effort -- but not with an improvement over Monte Carlo.

Finally, changing the nonlinearity $B$ is of theoretical interest for the
investigation of the performances of the method, and in applications it may be
of interest in those -- very common -- cases when some parameters of $B$ are not
precisely known and different simulations may be useful for comparison or for
ensemble averaging methods performed over the range of those parameters.

Let us finally come to a brief description of numerical
results. In Section 3, we present some numerical results based on the method proposed here in the finite dimensional settings with $d \geq 10$. The results, even if not fully satisfactory yet, should be compared with the fact that the innovative attempts to solve the Kolmogorov equation in $d>3$ by direct methods, see \cite{ChenMajda}, are often restricted to dimensions smaller than $10$. Large dimension is therefore a very difficult problem that deserves strong effort for improvement, and some of our results -- although not in all examples -- are quite promising.

As a final comment, let us explicitly mention that the class of Kolmogorov
equations studied here is particular, because of the additive and very
non-degenerate noise and because we have treated only relatively mild
nonlinearities. We have not considered relevant cases from fluid mechanics
which have more severe nonlinearities and activation of more scales; after a
few initial tests on dyadic models -- we point in particular to the recent
models on trees which may be very relevant for turbulence theory, see
\cite{Barbato, Bianchi, Bianchi Morandin} -- it was clear that
covering these examples with this approach requires further research and
improvements. Extension to multiplicative transport noises \cite{FlaLuo, FlaLuo2} is another challenging open question.

\section{The iteration scheme for Kolmogorov equations on Hilbert spaces}\label{sec-theory}

In this section we work in an infinite dimensional separable Hilbert space $H$ and study the iteration scheme for the Kolmogorov equation:
  \begin{equation}\label{KolE}
  \partial_t u(t,x) =\frac12 {\rm Tr}\big( Q D^2 u(t,x) \big) +\big\< Ax+ B(x), D u(t,x) \big\>, \quad u(0,\cdot)= u_0.
  \end{equation}
Here $A:D(A)\subset H\to H$ is an unbounded linear operator, $Q$ is a nonnegative self-adjoint bounded linear operator on $H$, $B: D(B)\subset H\to H$ is a nonlinear measurable mapping and $u_0:H\to \R$ is a real valued measurable function. In this section $Q$ plays the role of $\sigma^2 Q$ to simplify notation. In the following we write $\mathcal L(H,H)$ for the Banach space of bounded linear operators on $H$ with the norm $\|\cdot\|_{\mathcal L(H)}$.

Throughout this section we assume the following conditions:

\begin{hypothesis}\label{hypothe}
\begin{itemize}
\item[\rm (i)] $A:D(A)\subset H\to H$ is the infinitesimal generator of a strongly continuous semigroup $e^{tA}$.
\item[\rm (ii)] $Q$ is a nonnegative self-adjoint operator in $\mathcal L(H,H)$ satisfying ${\rm Ker}(Q)= \{0\}$, and for any $t>0$ the linear operator
  \begin{equation}\label{covar-matrix}
  Q_t= \int_0^t e^{sA} Q e^{sA^\ast} \,\d s
  \end{equation}
is of trace class.
\item[\rm (iii)] We have $e^{tA}(H) \subset Q_t^{1/2}(H)$ for any $t>0$.
\item[\rm (iv)] Letting $\Lambda(t) = Q_t^{-1/2} e^{tA}$, we assume there exist $\delta\in (0,1)$ and $C_\delta>0$ such that
  $$\|\Lambda(t) \|_{\mathcal L(H)} \leq C_\delta /t^\delta, \quad t>0 .$$
\end{itemize}
\end{hypothesis}

The assumptions (i)--(iii) are quite standard in the literature, see for instance \cite[Hypothesis 2.1 and 2.24]{DaPrato}. The operator $\Xi_\sigma (t)$ appeared in the introduction has the form
  \begin{equation}\label{sec-2.1}
  \Xi_\sigma (t) = \sigma Q_t^{-1/2} \Lambda(t) = \sigma Q_t^{-1} e^{tA};
  \end{equation}
we remark that, in the setting of the introduction, the operator $Q$ in \eqref{covar-matrix} should be replaced by $\sigma^2 Q$ when computing $Q_t$. The following example is taken from \cite[Example 2.5]{DaPrato} which verifies all the assumptions.

\begin{example}\label{ex:laplacian}
Let $\mathcal O=[0,\pi]^d$ with $d\in \N$. We choose $H= L^2(\mathcal O)$, and
  $$Ax= \Delta x, \quad x\in D(A)= H^2(\mathcal O) \cap H^1_0(\mathcal O),$$
where $\Delta$ is the Laplacian operator with Dirichlet boundary condition. $A$ is a self-adjoint negative operator in $H$, and
  $$A e_k= -|k|^2 e_k, \quad  k\in \N^d,$$
where for $k\in \N^d$, $|k|^2 = k_1^2 + \cdots + k_d^2$ and
  $$e_k(\xi)= (2/\pi)^{d/2} \sin(k_1\xi_1) \cdots \sin(k_d\xi_d), \quad \xi \in [0,\pi]^d. $$

Choose $Q= (-A)^{-\alpha},\, \alpha\in [0,1)$, so that
  $$Q x= \sum_{k\in \N^d} |k|^{-2\alpha} \<x, e_k\> e_k, \quad x\in H.$$
For any $t>0$, if $\alpha > d/2-1$, then
  $${\rm Tr}(Q_t) =\sum_{k\in \N^d} \frac1{2 |k|^{2+2\alpha}} \Big(1- e^{-2t|k|^2} \Big) <\infty.$$
So (ii) is satisfied.

Next, (iii) can be checked by explicit computations. Moreover,
  $$\Lambda(t) x= \sum_{k\in \N^d} \frac{\sqrt{2}\, |k|^{1+\alpha}}{\sqrt{e^{2t|k|^2} -1}} \<x, e_k\> e_k, \quad x\in H.$$
From this we deduce that
  $$\|\Lambda(t) \|_{\mathcal L(H)} \leq \frac{\sqrt{2C_\alpha}}{t^{(1+\alpha)/2}}, $$
where
  $$C_\alpha= \sup_{\theta>0} \frac{\theta^{1+\alpha}}{e^{2\theta}- 1} <+\infty. $$
Thus (iv) holds with $\delta = (1+\alpha)/2 \in [1/2,1)$.
\end{example}

We also need the following technical conditions.

\begin{hypothesis}\label{hypothe-1}
The initial datum $u_0:H\to \R$ and the nonlinear part $B:H\to H$ in \eqref{KolE} are bounded and measurable.
\end{hypothesis}

This section is organized as follows. In Subsection \ref{subsec-derivative-formula}, we recall some basic facts in Gaussian analysis on Hilbert space and give the formula for the first term $v^1(t,x)$ of the iteration \eqref{new-iteration}. We give in Section \ref{subsec-second-term} the details for calculating the second term $v^2(t,x)$, which will help us to guess and prove the formula for general terms $v^n(t,x)$ in Section \ref{subsec-general-term}. In the last part, we estimate the uniform norm of $v^n(t,x)$ and show the convergence of the iteration scheme. The limit is the unique mild solution of \eqref{KolE}, see Theorem \ref{thm-main-result}.

\subsection{Some preparations}\label{subsec-derivative-formula}

Let $W$ be a cylindrical Brownian motion on $H$:
  $$W_t = \sum_{k=1}^\infty W^k_t e_k,\quad t\geq 0,$$
where $\{e_k\}_{k\geq 1}$ is a complete orthonormal basis of $H$ and $\{W^k \}_{k\geq 1}$ is a family of independent one dimensional standard Brownian motions defined on some probability space $(\Omega, \mathcal F, \P)$. Under the conditions (i) and (ii) in Hypothesis \ref{hypothe}, the linear SDE
  \begin{equation}\label{linear-SDE}
  \d Z_t^x= AZ_t^x\,\d t + \sqrt{Q}\,\d W_t, \quad Z_0^x=x\in H
  \end{equation}
has a unique solution with the expression
  $$Z_t^x = e^{tA} x + W_A(t),\quad t>0,$$
where $W_A(t)$ is the stochastic convolution:
  $$W_A(t)=\int_0^t e^{(t-s)A} \sqrt{Q}\,\d W_s. $$
For any $t>0$, $W_A(t)$ is a centered Gaussian variable on $H$ with covariance operator $Q_t$. We denote its law by $N_{Q_t}(\d y)$. Accordingly, the law of $Z_t^x$ is denoted as $N_{e^{tA} x, Q_t}(\d y)$. Recall that for any $h\in H$, $\big\<h, Q_t^{-1/2}W_A(t) \big>$ is a centered real Gaussian variable with variance
  \begin{equation*}
  \E \big\<h, Q_t^{-1/2}W_A(t) \big>^2 =|h|_H^2.
  \end{equation*}

We shall write $\mathcal B(H)$ for the space of bounded measurable functions on $H$ and $C_b^1(H)$ the space of Fr\'echet differentiable functions, bounded with bounded derivatives. When $f\in C_b^1(H)$, its Fr\'echet derivative will be denoted by $D f$. For any $f\in \mathcal B(H)$ and $t\geq 0$, let
  \begin{equation*}
  S_t f(x):= \E f(Z_t^x) = \int_{H} f(y)\, N_{e^{tA} x, Q_t}(\d y) = \int_H f\big( e^{tA} x +y \big)\, N_{Q_t}(\d y).
  \end{equation*}
This defines a Markov semigroup on $H$. We have the following important result which implies $S_t$ is strong Feller (see \cite[Proposition 2.28]{DaPrato} for a proof).

\begin{proposition}\label{derivative-formula}
Assume the conditions (i)--(iii) in Hypothesis \ref{hypothe}. Then for all $f\in \mathcal B(H)$ and $t>0$, we have $S_t f \in C_b^1(H)$ and for any $h\in H$,
  \begin{equation}\label{derivative-formula.1}
  \<h, D S_tf(x)\> = \E \big[ f(Z_t^x) \big\<\Lambda(t) h, Q_t^{-1/2} \big(Z_t^x- e^{tA}x \big) \big\>\big].
  \end{equation}
Moreover,
  \begin{equation}\label{derivative-formula.2}
  \| D S_tf \|_\infty \leq \|f\|_\infty \|\Lambda(t)\|_{\mathcal L(H)}.
  \end{equation}
\end{proposition}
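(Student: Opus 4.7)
The plan is to apply the Cameron--Martin translation formula to the Gaussian measure $N_{Q_t}$ in order to convert the shift of the argument $x$ into a multiplicative Radon--Nikodym density, and then to differentiate under the integral. Hypothesis (iii), $e^{tA}(H) \subset Q_t^{1/2}(H)$, is precisely what makes Cameron--Martin applicable: for every $h \in H$ the shift $e^{tA}h$ lies in the Cameron--Martin space of $N_{Q_t}$, so that $\Lambda(t)h = Q_t^{-1/2} e^{tA} h$ is a well-defined element of $H$.

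Starting from $S_t f(x) = \int_H f(e^{tA}x + y)\, N_{Q_t}(\d y)$ and performing the translation $y \mapsto y - e^{tA}h$ in the definition of $S_t f(x+h)$, Cameron--Martin gives
$$ S_t f(x+h) = \int_H f(e^{tA}x + y)\, \rho_h(y)\, N_{Q_t}(\d y), \qquad \rho_h(y) = \exp\!\Big(\<\Lambda(t)h, Q_t^{-1/2}y\> - \tfrac12 |\Lambda(t)h|_H^2\Big). $$
Expanding $\rho_{\epsilon h}(y) = 1 + \epsilon \<\Lambda(t)h, Q_t^{-1/2}y\> + O(\epsilon^2)$ with Gaussian-integrable remainder, and using boundedness of $f$ together with dominated convergence, one passes to the limit $\epsilon\to 0$ to obtain
$$ \<h, D S_t f(x)\> = \mathbb{E}\big[f(Z_t^x)\, \<\Lambda(t)h, Q_t^{-1/2} W_A(t)\>\big], $$
which is \eqref{derivative-formula.1} since $W_A(t) = Z_t^x - e^{tA}x$. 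The bound \eqref{derivative-formula.2} then follows from Cauchy--Schwarz: the centered Gaussian $\<\Lambda(t)h, Q_t^{-1/2} W_A(t)\>$ has variance $|\Lambda(t)h|_H^2 \leq \|\Lambda(t)\|_{\mathcal L(H)}^2 |h|_H^2$. This uniform bound upgrades the G\^ateaux derivative to a Fr\'echet derivative, and continuity of $x \mapsto DS_tf(x)$ in $H$ is read off the same explicit formula: for continuous bounded $f$ it reduces to $L^2(N_{Q_t})$-continuity of $x \mapsto f(e^{tA}x + \cdot)$, and the extension to $f \in \mathcal B(H)$ is handled by approximating $f$ in $L^2(N_{Q_t})$ by continuous bounded functions and transferring the limit via \eqref{derivative-formula.2}.

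The genuine technical subtlety is that $Q_t^{-1/2}$ is unbounded in infinite dimensions, so $Q_t^{-1/2}W_A(t)$ is \emph{not} an honest $H$-valued random variable. The pairing $\<k, Q_t^{-1/2}W_A(t)\>$ must be interpreted as the $L^2(\Omega)$-isometric extension of $k \mapsto \<Q_t^{-1/2}k, W_A(t)\>$ from the dense subspace $Q_t^{1/2}(H)$ to all of $H$; equivalently, as the centered Gaussian variable with variance $|k|_H^2$ singled out just before the proposition. Once this interpretation is fixed the manipulations go through verbatim. I note that Hypothesis (iv) is not needed for this proposition itself--it will only enter when iterating the formula, to control the blow-up of $\|\Lambda(t)\|_{\mathcal L(H)}$ as $t\downarrow 0$ in the series for $v^n$.
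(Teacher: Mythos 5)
Your argument is correct and is essentially the proof the paper relies on: the paper gives no proof of this proposition itself, deferring to \cite[Proposition 2.28]{DaPrato}, and that proof is exactly the Cameron--Martin translation argument you carry out, including the white-noise interpretation of the pairing $\<k, Q_t^{-1/2}W_A(t)\>$ and the correct observation that Hypothesis (iv) plays no role here. The one step worth tightening is the continuity of $x\mapsto DS_tf(x)$ for merely measurable $f$: your $L^2(N_{Q_t})$-approximation by continuous functions must be made locally uniform in $x$, since the relevant measure $N_{e^{tA}x,Q_t}$ moves with $x$; this can be handled by another application of Cameron--Martin to control the densities on bounded sets, or more simply by writing $S_t=S_{t/2}\circ S_{t/2}$ and applying the $C_b^1$ statement to the already continuous bounded function $S_{t/2}f$.
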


Using the semigroup $S_t$, the \emph{mild} formulation of the Kolmogorov equation \eqref{KolE} is
  \begin{equation}\label{mild-sol}
  u(t,x)= (S_t u_0)(x) + \int_0^t \big(S_{t-s} \< B, D u(s) \>\big)(x)\,\d s.
  \end{equation}
This suggests us to consider the iterative scheme:
  $$u^{n+1}(t,x) = (S_t u_0)(x)+ \int_0^t \big(S_{t-s} \< B, D u^n(s) \>\big)(x)\,\d s$$
with $u^0(t,x) = (S_t u_0)(x) = \E u_0(Z_t^x)$. We define $v^0(t,x) =u^0(t,x)$ and
  $$v^n(t,x) = u^n(t,x)- u^{n-1}(t,x),\quad n\geq 1,$$
then the new functions satisfy the iteration procedure:
  \begin{equation}\label{new-iteration}
  \left\{ \aligned
  v^{n+1}(t,x) &= \int_0^t (S_{t-s} k^n_s)(x)\,\d s, \\
  k^n_s(y) &= \< B(y), D v^n(s,y) \>, \\
  v^0(t,x)&= \E u_0(Z_t^x).
  \endaligned
  \right.
  \end{equation}

Before concluding this section, we show how to obtain the first term $v^1(t,x)$. Since $u_0\in \mathcal B(H)$, Proposition \ref{derivative-formula} implies $v^0(t)\in C_b^1(H)$ for any $t>0$, and thus $\< B, D v^0(t) \> \in \mathcal B(H)$. Denote by $\mathcal F_t$ the filtration generated by the cylindrical Brownian motion $W_t$.

\begin{lemma}\label{lem-first-iteration}
It holds that
  $$\big(S_{t-s} k^0_s \big)(x) = \E\Big[u_0(Z_t^x) \big\<\Lambda(s) B(Z_{t-s}^x), Q_s^{-1/2}\big(Z_t^x - e^{sA} Z_{t-s}^x \big) \big\> \Big].$$
\end{lemma}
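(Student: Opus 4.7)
The plan is to unfold the two nested expectations that define $S_{t-s}k^0_s(x)$ and then glue them into a single expectation by using the Markov structure of the Ornstein-Uhlenbeck process $Z_\cdot^x$. By definition, $S_{t-s}k^0_s(x) = \E[k^0_s(Z_{t-s}^x)]$ and $k^0_s(y) = \langle B(y), Dv^0(s,y)\rangle$ with $v^0(s,y) = S_s u_0(y)$. So the first task is to rewrite the inner gradient using the Bismut-type formula, i.e., Proposition \ref{derivative-formula} applied to $f=u_0$ and $h = B(y)$, which is licit since $u_0 \in \mathcal B(H)$ and $B(y)\in H$ is deterministic once we condition on $y$. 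This yields
$$ k^0_s(y) = \E\!\left[u_0(Z_s^y)\,\big\langle \Lambda(s) B(y), Q_s^{-1/2}(Z_s^y - e^{sA} y)\big\rangle\right],$$
where the expectation is over an auxiliary Gaussian realization $W_A(s)$ that appears only through $Z_s^y = e^{sA} y + W_A(s)$.

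Next I would carry out the crucial flow decomposition. Writing $W_A(t) = e^{sA} W_A(t-s) + \widetilde W(s)$ with
$$ \widetilde W(s):= \int_{t-s}^t e^{(t-r)A}\sqrt Q\,\d W_r,$$
one sees that $\widetilde W(s)$ is independent of $\mathcal F_{t-s}$ and has law $N_{Q_s}$, and that $Z_t^x = e^{sA} Z_{t-s}^x + \widetilde W(s)$. This is precisely the tool that lets me identify the "fresh" Gaussian variable $Z_s^y - e^{sA} y$ appearing in the derivative formula, evaluated at the random point $y = Z_{t-s}^x$, with the concrete random variable $\widetilde W(s) = Z_t^x - e^{sA} Z_{t-s}^x$ built from the original driving noise.

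To finish, I would substitute $y=Z_{t-s}^x$ in the formula for $k^0_s(y)$, take the outer expectation $\E[\,\cdot\,]$, and then combine the inner and outer expectations into a single one using independence of $\widetilde W(s)$ from $\mathcal F_{t-s}$ (Fubini/tower property). The inner conditional law of $(Z_s^y - e^{sA}y, Z_s^y)$ at $y=Z_{t-s}^x$ coincides with the conditional law of $(\widetilde W(s), Z_t^x)$ given $\mathcal F_{t-s}$, so the two expressions collapse to
$$\E\!\left[u_0(Z_t^x)\,\big\langle \Lambda(s) B(Z_{t-s}^x), Q_s^{-1/2}(Z_t^x - e^{sA} Z_{t-s}^x)\big\rangle\right],$$
which is the claim.

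The only delicate point is the identification step: one must justify that the Bismut formula, which is stated for a deterministic point $y\in H$, can be applied with $y$ replaced by the $\mathcal F_{t-s}$-measurable random element $Z_{t-s}^x$, and that the auxiliary Gaussian appearing there can be taken to be the concrete increment $\widetilde W(s)$ rather than an abstract independent copy. This is where Hypothesis \ref{hypothe} (iii) and the measurability of $B$ and $u_0$ enter, to ensure the integrand is jointly measurable and the relevant moments are finite (boundedness of $u_0$ and $B$ from Hypothesis \ref{hypothe-1}, together with the bound $\|\Lambda(s)\|_{\mathcal L(H)} \le C_\delta s^{-\delta}$, make the integrand $L^1$). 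Once this measurability/independence bookkeeping is in place, everything reduces to an application of Fubini and the rest is formal.
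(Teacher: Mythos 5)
Your proof is correct and is essentially the paper's argument: both rest on applying the derivative formula of Proposition \ref{derivative-formula} with $f=u_0$, $h=B(y)$, and then evaluating at the random point $y=Z_{t-s}^x$ and merging the two expectations via the Markov structure of $Z^x$. The only difference is presentational -- you run the computation from left to right and realize the Markov property concretely through the decomposition $Z_t^x=e^{sA}Z_{t-s}^x+\widetilde W(s)$ with $\widetilde W(s)\sim N_{Q_s}$ independent of $\mathcal F_{t-s}$, whereas the paper starts from the right-hand side and uses the tower property together with two abstract invocations of the Markov property.
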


\begin{proof}
Use the property of conditional expectation:
  $$\aligned
  &\ \E \Big[u_0(Z_t^x) \big\<\Lambda(s) B(Z_{t-s}^x), Q_s^{-1/2} (Z_t^x- e^{sA} Z_{t-s}^x) \big\> \Big]\\
  =&\ \E\Big\{\E \Big[u_0(Z_t^x) \big\<\Lambda(s) B(Z_{t-s}^x), Q_s^{-1/2} (Z_t^x- e^{sA} Z_{t-s}^x) \big\> \big| \mathcal F_{t-s} \Big] \Big\} \\
  =&\ \E\Big\{\E \Big[u_0(Z_t^x) \big\<\Lambda(s) B(Z_{t-s}^x), Q_s^{-1/2} (Z_t^x- e^{sA} Z_{t-s}^x) \big\> \big| Z_{t-s}^x \Big] \Big\},
  \endaligned $$
where the second step follows from the Markov property. Again by the Markov property,
  $$\aligned
  &\ \E \Big[u_0(Z_t^x) \big\<\Lambda(s) B(Z_{t-s}^x), Q_s^{-1/2} (Z_t^x- e^{sA} Z_{t-s}^x) \big\> \big| Z_{t-s}^x \Big]\\
  =&\ \E \Big[u_0(Z_s^y) \big\<\Lambda(s) B(y), Q_s^{-1/2} (Z_s^y- e^{sA} y) \big\> \Big]_{y= Z_{t-s}^x }\\
  =&\ k^0_s(y) \big|_{y= Z_{t-s}^x }= k^0_s(Z_{t-s}^x),
  \endaligned $$
where the second step is due to \eqref{derivative-formula.1}. Substituting this equality into the previous one we obtain the identity.
\end{proof}

The above lemma implies

\begin{corollary}\label{cor-first-term}
For any $t>0$ and $x\in H$,
  \begin{equation}\label{cor-first-term.1}
  v^1(t,x) = \int_0^t \E\Big[u_0(Z_t^x) \big\<\Lambda(s) B(Z_{t-s}^x), Q_s^{-1/2}\big(Z_t^x - e^{sA} Z_{t-s}^x \big) \big\> \Big]\,\d s.
  \end{equation}
Moreover,
  $$\|v^1(t)\|_\infty \leq \|u_0\|_\infty \|B\|_\infty \int_0^t \|\Lambda(s)\|_{\mathcal L(H)}\,\d s$$
and
  $$\|D v^1(t)\|_\infty \leq \|u_0\|_\infty \|B\|_\infty \int_0^t \|\Lambda(t-s)\|_{\mathcal L(H)} \|\Lambda(s)\|_{\mathcal L(H)}\,\d s. $$
\end{corollary}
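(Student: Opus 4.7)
The formula \eqref{cor-first-term.1} is essentially a direct corollary of Lemma \ref{lem-first-iteration}: the iteration \eqref{new-iteration} with $n=0$ gives $v^1(t,x)=\int_0^t (S_{t-s}k^0_s)(x)\,\d s$, and Lemma \ref{lem-first-iteration} rewrites the integrand as the expectation appearing in \eqref{cor-first-term.1}, so the identity follows by interchanging $\E$ with $\int_0^t \cdot\,\d s$ (Fubini, once integrability is checked, which follows from the two sup-norm bounds to be proved next).

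For the $L^\infty$ estimate, I would first note that $S_{t-s}$ is a Markov (hence $L^\infty$-contractive) operator on $\mathcal B(H)$, so
$$\|v^1(t)\|_\infty \leq \int_0^t \|S_{t-s}k^0_s\|_\infty\,\d s \leq \int_0^t \|k^0_s\|_\infty\,\d s.$$
Then $k^0_s(y)=\<B(y),Dv^0(s,y)\>$ is pointwise bounded by $\|B\|_\infty |Dv^0(s,y)|$, and since $v^0(s,\cdot)=S_s u_0$, estimate \eqref{derivative-formula.2} of Proposition \ref{derivative-formula} yields $\|Dv^0(s)\|_\infty \leq \|u_0\|_\infty \|\Lambda(s)\|_{\mathcal L(H)}$. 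Combining these gives the first bound.

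For the gradient estimate the plan is to move the derivative inside the time integral,
$$Dv^1(t,x)=\int_0^t D\bigl(S_{t-s} k^0_s\bigr)(x)\,\d s,$$
and then apply \eqref{derivative-formula.2} to $f=k^0_s\in\mathcal B(H)$ with the parameter $t-s$ in place of $t$, obtaining $\|D(S_{t-s}k^0_s)\|_\infty\leq \|k^0_s\|_\infty\,\|\Lambda(t-s)\|_{\mathcal L(H)}\leq \|B\|_\infty\|u_0\|_\infty\,\|\Lambda(s)\|_{\mathcal L(H)}\,\|\Lambda(t-s)\|_{\mathcal L(H)}$, which after integration is the stated bound.

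The only genuinely delicate step is the interchange of $D$ with the $\d s$ integral; the natural way is to differentiate under the integral sign along $h\in H$, which requires a dominating integrable majorant in $s$. Hypothesis \ref{hypothe}(iv) gives $\|\Lambda(s)\|_{\mathcal L(H)}\|\Lambda(t-s)\|_{\mathcal L(H)}\leq C_\delta^2 s^{-\delta}(t-s)^{-\delta}$, which is integrable on $(0,t)$ since $\delta\in(0,1)$, so a standard dominated-convergence argument legitimises the interchange. The same integrable majorant also retroactively justifies the Fubini used in obtaining \eqref{cor-first-term.1}. This is the point where Hypothesis \ref{hypothe}(iv) is essential, and it foreshadows the $\Gamma$-function estimate that will be needed for $v^n$ in the main theorem.
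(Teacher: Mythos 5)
Your proposal is correct and follows essentially the same route as the paper's proof: formula \eqref{cor-first-term.1} from Lemma \ref{lem-first-iteration}, the sup-norm bound from the contractivity of $S_{t-s}$ together with the bound $\|k^0_s\|_\infty\leq \|B\|_\infty\|u_0\|_\infty\|\Lambda(s)\|_{\mathcal L(H)}$ coming from \eqref{derivative-formula.2}, and the gradient bound by differentiating under the time integral and applying \eqref{derivative-formula.2} to $k^0_s\in\mathcal B(H)$. Your extra justification of the interchange of $D$ with $\int_0^t\cdot\,\d s$ via the integrable majorant $C_\delta^2 s^{-\delta}(t-s)^{-\delta}$ is a welcome addition that the paper leaves implicit (note only that no Fubini is actually needed for \eqref{cor-first-term.1} itself, since the expectation already sits inside the $\d s$ integral).
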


\begin{proof}
The formula \eqref{cor-first-term.1} follows directly from Lemma \ref{lem-first-iteration}. Next, by the definition \eqref{new-iteration} of the iteration, for any $s>0$ and $y\in H$,
  \begin{equation}\label{cor-first-term.2}
  \big|k^0_s(y)\big| \leq |B(y)| \, |D v^0(s,y)| \leq \|B\|_\infty |D S_s u_0(y)| \leq \|B\|_\infty \|u_0\|_\infty \|\Lambda(s)\|_{\mathcal L(H)},
  \end{equation}
where the last inequality follows from \eqref{derivative-formula.2}. Therefore,
  $$|v^1(t,x)| \leq \int_0^t \big| \big(S_{t-s} k^0_s\big)(x)\big| \,\d s \leq \int_0^t \big\| k^0_s \big\|_\infty \,\d s \leq \|u_0\|_\infty \|B\|_\infty \int_0^t \|\Lambda(s)\|_{\mathcal L(H)}\,\d s $$
which yields the estimate on $\|v^1(t)\|_\infty$. The inequality \eqref{cor-first-term.2} implies that $k^0_s\in \mathcal B(H)$ for all $s>0$, hence by Proposition \ref{derivative-formula}, $S_{t-s} k^0_s \in C_b^1(H)$ and
  $$D v^1(t,x) = \int_0^t D \big(S_{t-s} k^0_s\big)(x) \,\d s.$$
Finally, by \eqref{derivative-formula.2},
  $$\|D v^1(t)\|_\infty \leq \int_0^t \big\|D \big(S_{t-s} k^0_s\big)\big\|_\infty \,\d s \leq \int_0^t \big\|k^0_s \big\|_\infty \|\Lambda(t-s) \|_{\mathcal L(H)}\,\d s,$$
which, together with \eqref{cor-first-term.2}, gives us the last estimate.
\end{proof}

\subsection{The term $v^2(t,x)$} \label{subsec-second-term}

In this part, we compute the second term in the iteration to illustrate the ideas. First we prove

\begin{lemma}\label{lem-sec-term-2}
One has
\begin{align*}
k_{t}^{1}( x) & =\int_0^t \E\Big[ u_0( Z_t^x) \big\<\Lambda(s) B(Z_{t-s}^x), Q_s^{-1/2} (Z_{t}^x- e^{sA} Z_{t-s}^x) \big\> \\
  &\hskip40pt \times \big\<\Lambda(t-s) B(x), Q_{t-s}^{-1/2} (Z_{t-s}^x- e^{(t-s)A}x ) \big\> \Big] \,\d s.
\end{align*}
\end{lemma}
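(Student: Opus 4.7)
The plan is to compute $k_t^1(x) = \langle B(x), D v^1(t,x) \rangle$ by differentiating the formula for $v^1$ from Corollary \ref{cor-first-term} and then applying the derivative formula \eqref{derivative-formula.1} a second time (combined with the Markov property, just as in the proof of Lemma \ref{lem-first-iteration}).

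First I would recall from the proof of Corollary \ref{cor-first-term} that, since $k_s^0 \in \mathcal B(H)$ with $\|k_s^0\|_\infty \leq \|u_0\|_\infty \|B\|_\infty \|\Lambda(s)\|_{\mathcal L(H)}$, Proposition \ref{derivative-formula} gives $S_{t-s} k_s^0 \in C_b^1(H)$ and one may differentiate under the integral sign to obtain $D v^1(t,x) = \int_0^t D\bigl(S_{t-s} k_s^0\bigr)(x)\,\d s$. Pairing with $B(x)$ and applying \eqref{derivative-formula.1} (with $f = k_s^0$, $h = B(x)$, time $t-s$) yields
\[
\bigl\langle B(x), D(S_{t-s} k_s^0)(x)\bigr\rangle = \E\Bigl[k_s^0(Z_{t-s}^x)\,\bigl\langle \Lambda(t-s) B(x), Q_{t-s}^{-1/2}(Z_{t-s}^x - e^{(t-s)A} x)\bigr\rangle\Bigr].
\]

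Next I would expand $k_s^0$ itself. Since $v^0(s,y) = \E u_0(Z_s^y) = (S_s u_0)(y)$, the formula \eqref{derivative-formula.1} applied at $y$ gives
\[
k_s^0(y) = \bigl\langle B(y), D S_s u_0(y)\bigr\rangle = \E\Bigl[u_0(Z_s^y)\,\bigl\langle \Lambda(s) B(y), Q_s^{-1/2}(Z_s^y - e^{sA} y)\bigr\rangle\Bigr].
\]
Setting $y = Z_{t-s}^x$ and invoking the Markov property in the same form as in the proof of Lemma \ref{lem-first-iteration} (so that $Z_s^y|_{y = Z_{t-s}^x}$ is realized as $Z_t^x$ conditioned on $\mathcal F_{t-s}$), this becomes
\[
k_s^0(Z_{t-s}^x) = \E\Bigl[u_0(Z_t^x)\,\bigl\langle \Lambda(s) B(Z_{t-s}^x), Q_s^{-1/2}(Z_t^x - e^{sA} Z_{t-s}^x)\bigr\rangle\,\Big|\,\mathcal F_{t-s}\Bigr].
\]

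Finally, I would substitute this into the display for $\langle B(x), D(S_{t-s} k_s^0)(x)\rangle$ and use the tower property: the second inner product $\langle \Lambda(t-s) B(x), Q_{t-s}^{-1/2}(Z_{t-s}^x - e^{(t-s)A} x)\rangle$ is $\mathcal F_{t-s}$-measurable, so it can be pulled inside the conditional expectation. Integrating the resulting identity in $s$ from $0$ to $t$ gives exactly the claimed formula for $k_t^1(x)$. The only subtle point, and the natural place to be careful, is the double application of the Markov property together with the measurability bookkeeping for pulling the outer factor into the conditional expectation; everything else is direct substitution and linearity.
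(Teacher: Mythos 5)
Your proposal is correct and follows essentially the same route as the paper's proof: differentiate $v^1$ under the integral, apply the derivative formula \eqref{derivative-formula.1} with $f=k_s^0$ and $h=B(x)$, identify $k_s^0(Z_{t-s}^x)$ as a conditional expectation given $\mathcal F_{t-s}$ exactly as in Lemma \ref{lem-first-iteration}, and conclude by the tower property using the $\mathcal F_{t-s}$-measurability of the outer inner product. No gaps.
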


\begin{proof}
By Corollary \ref{cor-first-term}, for any $t>0$, $v^1(t)\in C_b^1(H)$ and
\[\aligned
k_{t}^{1}( x) &=\left\langle B(x), D v^{1} (t,x)  \right\rangle =\, \int_{0}^{t}\big\langle B( x),D \big( S_{t-s} k^0_s\big)(x)  \big\rangle\, \d s.
\endaligned \]%
Recall that \eqref{cor-first-term.2} implies $k^0_s\in \mathcal B(H)$, thus by Proposition \ref{derivative-formula},
  $$k_{t}^{1}( x) =\int_{0}^{t} \E\Big[ k^0_s(Z^x_{t-s}) \big\langle \Lambda(t-s) B(x), Q_{t-s}^{-1/2} \big( Z^x_{t-s} - e^{(t-s) A} x \big) \big\rangle \Big]\, \d s. $$
According to the proof of Lemma \ref{lem-first-iteration}, we have
  $$k^0_s(Z^x_{t-s})= \E \Big[u_0(Z_t^x) \big\<\Lambda(s) B(Z_{t-s}^x), Q_s^{-1/2} (Z_t^x- e^{sA} Z_{t-s}^x) \big\> \big| \mathcal F_{t-s} \Big].$$
Note that $\big\langle \Lambda(t-s) B(x), Q_{t-s}^{-1/2} \big( Z^x_{t-s} - e^{(t-s) A} x \big) \big\rangle$ is $\mathcal F_{t-s}$-measurable. Substituting this equality into the one above and using the property of conditional expectation, we obtain the desired result.
\end{proof}

Now we are ready to present the expression and estimates for the second iteration.

\begin{proposition}\label{prop-second-iteration}
For any $t>0$ and $x\in H$,
\begin{align*}
  v^{2}(t,x) & =\int_{0}^{t}\! \int_{0}^{s} \E\Big[ u_0( Z_t^x) \big\<\Lambda(r) B(Z_{t-r}^x), Q_r^{-1/2} (Z_t^x - e^{rA} Z_{t-r}^x ) \big\> \\
  &\hskip60pt \times \big\<\Lambda(s-r) B(Z^x_{t-s}), Q_{s-r}^{-1/2} (Z_{t-r}^x- e^{(s-r)A} Z^x_{t-s} ) \big\> \Big] \,\d r\d s.
\end{align*}
Furthermore,
  $$\|v^2(t)\|_\infty \leq \|u_0\|_\infty \|B\|_\infty^2 \int_{0}^{t}\! \int_{0}^{s} \|\Lambda(s-r)\|_{\mathcal L(H)} \|\Lambda(r)\|_{\mathcal L(H)}\,\d r\d s$$
and
  $$\|D v^2(t)\|_\infty \leq \|u_0\|_\infty \|B\|_\infty^2 \int_{0}^{t}\! \int_{0}^{s} \|\Lambda(t-s)\|_{\mathcal L(H)} \|\Lambda(s-r)\|_{\mathcal L(H)} \|\Lambda(r)\|_{\mathcal L(H)}\,\d r\d s. $$
\end{proposition}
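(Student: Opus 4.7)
\medskip

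\noindent\textbf{Proof proposal.} The plan is to iterate the method used in Lemma \ref{lem-first-iteration} and Lemma \ref{lem-sec-term-2}, using the Markov property of $Z^x$ to convert the nested expectations into a single expectation over the process $(Z^x_u)_{0\le u\le t}$, and then to bound the resulting object in sup and in derivative sup norm by the elementary arguments already used in Corollary \ref{cor-first-term}.

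First, I would start from the iteration \eqref{new-iteration} at level $n=1$:
\[
v^2(t,x) = \int_0^t (S_{t-s} k^1_s)(x)\, \d s.
\]
Corollary \ref{cor-first-term} ensures $v^1(s,\cdot)\in C_b^1(H)$ for every $s>0$, so $k^1_s$ is a well-defined bounded measurable function on $H$. Applying Lemma \ref{lem-sec-term-2} with $(t,x)$ replaced by $(s,y)$ and renaming the inner time variable as $r\in(0,s)$ gives
\begin{align*}
k^1_s(y) = \int_0^s \E\Big[ & u_0(Z_s^y)\, \big\<\Lambda(r)B(Z_{s-r}^y),Q_r^{-1/2}(Z_s^y-e^{rA}Z_{s-r}^y)\big\> \\
 & \times \big\<\Lambda(s-r)B(y),Q_{s-r}^{-1/2}(Z_{s-r}^y-e^{(s-r)A}y)\big\>\Big]\d r.
\end{align*}
By definition $(S_{t-s}k^1_s)(x)=\E\bigl[k^1_s(Z_{t-s}^x)\bigr]$, so Fubini (legitimate because $k^1_s$ is bounded uniformly, cf.\ Corollary \ref{cor-first-term}) gives a double expectation with the outer one evaluated at $y=Z_{t-s}^x$.

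The key step is then the Markov property at time $t-s$: conditionally on $\mathcal F_{t-s}$, the process $(Z_u^y)_{u\ge 0}$ with $y=Z_{t-s}^x$ has the same law as $(Z_{(t-s)+u}^x)_{u\ge 0}$. In particular, under this identification, $Z_s^y\leftrightarrow Z_t^x$ and $Z_{s-r}^y\leftrightarrow Z_{t-r}^x$, while the deterministic $B(y)$ becomes $B(Z_{t-s}^x)$. Together with the tower property, this converts the inner expectation, when evaluated at $y=Z_{t-s}^x$ and averaged, into the single expectation
\begin{align*}
(S_{t-s}k^1_s)(x) = \int_0^s \E\Big[ & u_0(Z_t^x)\,\big\<\Lambda(r)B(Z_{t-r}^x),Q_r^{-1/2}(Z_t^x-e^{rA}Z_{t-r}^x)\big\> \\
& \times \big\<\Lambda(s-r)B(Z_{t-s}^x),Q_{s-r}^{-1/2}(Z_{t-r}^x-e^{(s-r)A}Z_{t-s}^x)\big\>\Big]\d r.
\end{align*}
Integrating over $s\in(0,t)$ and using Fubini on the triangular domain $\{0<r<s<t\}$ yields exactly the stated formula for $v^2(t,x)$.

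For the norm estimates, I would avoid re-opening the explicit expression and instead use the already established bounds. Writing $k^1_s(y)=\langle B(y),Dv^1(s,y)\rangle$ and invoking the derivative estimate in Corollary \ref{cor-first-term}, one has
\[
\|k^1_s\|_\infty \le \|B\|_\infty\|Dv^1(s)\|_\infty \le \|u_0\|_\infty \|B\|_\infty^2 \int_0^s \|\Lambda(s-r)\|_{\mathcal L(H)}\|\Lambda(r)\|_{\mathcal L(H)}\,\d r.
\]
Since $S_{t-s}$ is a Markov semigroup, $\|S_{t-s}k^1_s\|_\infty\le\|k^1_s\|_\infty$, and integrating in $s$ gives the stated bound on $\|v^2(t)\|_\infty$. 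The bound on $k^1_s$ also shows $k^1_s\in\mathcal B(H)$, so Proposition \ref{derivative-formula} applies: $S_{t-s}k^1_s\in C_b^1(H)$ with $\|D(S_{t-s}k^1_s)\|_\infty\le\|k^1_s\|_\infty\|\Lambda(t-s)\|_{\mathcal L(H)}$. Differentiating under the integral (justified by this uniform control and dominated convergence) and substituting the bound on $\|k^1_s\|_\infty$ yields the stated estimate for $\|Dv^2(t)\|_\infty$.

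The only delicate point, which I expect to be the real obstacle, is the bookkeeping in the Markov step: one must be careful that the shift $y=Z_{t-s}^x$ is correctly propagated through every occurrence of $Z^y_\cdot$, through $B(y)$, and through $e^{(s-r)A}y$, and that the inner conditional expectation combines cleanly with the outer one. Everything else reduces to Fubini and the two sup-norm estimates already proved in Proposition \ref{derivative-formula} and Corollary \ref{cor-first-term}, and the same pattern will drive the induction for general $n$ in the next subsection.
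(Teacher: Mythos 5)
Your proposal is correct and follows essentially the same route as the paper: apply Lemma \ref{lem-sec-term-2} at $(s,y)$, evaluate $(S_{t-s}k^1_s)(x)=\E[k^1_s(Z^x_{t-s})]$ via the Markov property to collapse the nested expectations, integrate in $s$, and derive both norm bounds from $\|k^1_s\|_\infty\le\|B\|_\infty\|Dv^1(s)\|_\infty$ together with Corollary \ref{cor-first-term} and Proposition \ref{derivative-formula}. No gaps.
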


\begin{proof}
By Lemma \ref{lem-sec-term-2}, for any $s>0$ and $y\in H$,
\begin{align*}
k_{s}^{1}( y)& =\int_0^s \E\Big[ u_0( Z_s^y) \big\<\Lambda(r) B(Z_{s-r}^y), Q_r^{-1/2} (Z_{s}^y- e^{rA} Z_{s-r}^y ) \big\> \\
  &\hskip40pt \times \big\<\Lambda(s-r) B(y), Q_{s-r}^{-1/2} (Z_{s-r}^y- e^{(s-r)A} y ) \big\> \Big] \,\d r.
\end{align*}
We have%
  $$\aligned
  \E \big[ k^1_s(Z^x_{t-s}) \big] &= \E \bigg\{ \int_0^s \E\Big[ u_0( Z_s^y) \big\<\Lambda(r) B(Z_{s-r}^y), Q_r^{-1/2} (Z_{s}^y- e^{rA} Z_{s-r}^y ) \big\> \\
  &\hskip60pt \times \big\<\Lambda(s-r) B(y), Q_{s-r}^{-1/2} (Z_{s-r}^y- e^{(s-r)A} y ) \big\> \Big]_{y= Z^x_{t-s}} \,\d r \bigg\} \\
  &= \int_0^s \E\Big[ u_0( Z_t^x) \big\<\Lambda(r) B(Z_{t-r}^x), Q_r^{-1/2} (Z_t^x - e^{rA} Z_{t-r}^x ) \big\> \\
  &\hskip45pt \times \big\<\Lambda(s-r) B(Z^x_{t-s}), Q_{s-r}^{-1/2} (Z_{t-r}^x- e^{(s-r)A} Z^x_{t-s} ) \big\> \Big] \,\d r,
  \endaligned $$
where the second step follows from the Markov property. Therefore,
  \begin{align*}
  v^{2} (t,x ) & =\int_{0}^{t} \big( S_{t-s}k_{s}^{1}\big)( x)\,\d s =\int_{0}^{t}\mathbb{E} \big[ k_{s}^{1}( Z_{t-s}^{x}) \big] \d s \\
  & =\int_{0}^{t}\int_{0}^{s} \E\Big[ u_0( Z_t^x) \big\<\Lambda(r) B(Z_{t-r}^x), Q_r^{-1/2} (Z_t^x - e^{rA} Z_{t-r}^x ) \big\> \\
  &\hskip60pt \times \big\<\Lambda(s-r) B(Z^x_{t-s}), Q_{s-r}^{-1/2} (Z_{t-r}^x- e^{(s-r)A} Z^x_{t-s} ) \big\> \Big] \,\d r\d s.
  \end{align*}

Next, by the definition of $k^1_s$ and the last inequality in Corollary \ref{cor-first-term},
  \begin{equation}\label{prop-second-iteration.1}
  \big\| k^1_s \big\|_\infty \leq \|B\|_\infty \|D v^1(s)\|_\infty \leq \|u_0\|_\infty \|B\|_\infty^2 \int_{0}^{s} \|\Lambda(s-r)\|_{\mathcal L(H)} \|\Lambda(r)\|_{\mathcal L(H)}\,\d r.
  \end{equation}
This immediately implies
  $$\aligned
  |v^{2} (t,x )| &\leq \int_0^t \big\| k^1_s \big\|_\infty \,\d s \leq \|u_0\|_\infty \|B\|_\infty^2 \int_0^t \int_{0}^{s} \|\Lambda(s-r)\|_{\mathcal L(H)} \|\Lambda(r)\|_{\mathcal L(H)}\,\d r \d s,
  \endaligned $$
and we obtain the estimate on $\|v^2(t) \|_\infty$. Moreover, by Proposition \ref{derivative-formula},
  $$|D v^{2} (t,x )| \leq \int_0^t \big| D \big( S_{t-s}k_{s}^{1}\big)( x) \big| \,\d s \leq \int_0^t \big\| k^1_s \big\|_\infty \|\Lambda(t-s) \|_{\mathcal L(H)}\,\d s,$$
which, combined with \eqref{prop-second-iteration.1}, gives us the second estimate.
\end{proof}

\subsection{The general terms $v^n(t,x)$} \label{subsec-general-term}

In order to do further iteration, we rewrite the formula in Proposition \ref{prop-second-iteration} as
  \begin{align*}
  v^{2} (t,x ) & =\int_{0}^{t} \d s_2 \int_{0}^{s_2} \d s_1\, \E\Big[ u_0( Z_t^x) \big\<\Lambda(s_1) B(Z_{t-s_1}^x), Q_{s_1}^{-1/2} (Z_t^x - e^{s_1 A} Z_{t-s_1}^x ) \big\> \\
  &\hskip80pt \times \big\<\Lambda(s_2-s_1) B(Z^x_{t-s_2}), Q_{s_2-s_1}^{-1/2} (Z_{t-s_1}^x- e^{(s_2-s_1)A} Z^x_{t-s_2} ) \big\> \Big] .
  \end{align*}
Moreover, denoting by $s_0=0$, then we have
  $$\aligned
  v^{2} (t,x )=& \int_{0}^{t} \d s_2 \int_{0}^{s_2} \d s_1\\
  &\ \E\Bigg[ u_0( Z_t^x) \prod_{i=1}^2 \Big\<\Lambda(s_i- s_{i-1}) B(Z_{t-s_i}^x), Q_{s_i-s_{i-1}}^{-1/2} \big( Z_{t-s_{i-1}}^x - e^{(s_i- s_{i-1}) A} Z_{t-s_i}^x \big) \Big\> \Bigg].
  \endaligned $$
From this we can guess the general formulae.

\begin{theorem}\label{thm-iteration}
Let $s_0=0$. For any $n\geq 1$,
  \begin{equation}\label{iteration}
  \aligned
  v^{n} (t,x ) =& \int_{0}^{t} \d s_n \int_0^{s_n} \d s_{n-1} \cdots \int_{0}^{s_2} \d s_1\\
  &\ \E\Bigg[ u_0( Z_t^x) \prod_{i=1}^n \Big\<\Lambda(s_i- s_{i-1}) B(Z_{t-s_i}^x), Q_{s_i-s_{i-1}}^{-1/2} \big( Z_{t-s_{i-1}}^x - e^{(s_i- s_{i-1}) A} Z_{t-s_i}^x \big) \Big\> \Bigg].
  \endaligned
  \end{equation}
Moreover,
  $$\|v^n(t)\|_\infty \leq \|u_0\|_\infty \|B\|_\infty^n \int_{0}^{t} \d s_n \int_0^{s_n} \d s_{n-1} \cdots \int_{0}^{s_2} \d s_1 \, \prod_{i=1}^n \|\Lambda(s_i-s_{i-1})\|_{\mathcal L(H)} $$
and, letting $s_{n+1} =t$,
  $$\|D v^n(t)\|_\infty \leq \|u_0\|_\infty \|B\|_\infty^n \int_{0}^{t} \d s_n \int_0^{s_n} \d s_{n-1} \cdots \int_{0}^{s_2} \d s_1 \, \prod_{i=1}^{n+1} \|\Lambda(s_i-s_{i-1})\|_{\mathcal L(H)}. $$
\end{theorem}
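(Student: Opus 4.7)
The plan is to prove the formula \eqref{iteration} and the two uniform norm estimates simultaneously by induction on $n$. The base case $n=1$ is exactly Corollary \ref{cor-first-term}, and the case $n=2$ (Proposition \ref{prop-second-iteration}) already exhibits the structure that needs to be extended to general $n$.

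For the inductive step $n \to n+1$, I would start from the recursion $v^{n+1}(t,x) = \int_0^t \mathbb{E}[k^n_s(Z_{t-s}^x)]\,\d s$ and first obtain a workable expression for $k^n_s(y)$. Writing $v^n(s,y) = \int_0^s (S_{s-r} k^{n-1}_r)(y)\,\d r$ and applying Proposition \ref{derivative-formula} to the outer semigroup $S_{s-r}$ would give
$$k^n_s(y) = \int_0^s \mathbb{E}\!\left[k^{n-1}_r(Z_{s-r}^y)\,\bigl\langle\Lambda(s-r)B(y),\, Q_{s-r}^{-1/2}(Z_{s-r}^y - e^{(s-r)A}y)\bigr\rangle\right]\,\d r,$$
which generalizes Lemma \ref{lem-sec-term-2}. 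A short auxiliary induction would iterate this identity to express $k^n_s(y)$ as an $n$-fold simplex integral of an expectation whose integrand is $u_0(Z_s^y)$ times $n$ inner-product factors analogous to those in \eqref{iteration}, with $y$ playing the role of $x$.

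The heart of the argument will then be to substitute $y = Z_{t-s}^x$ into this formula for $k^n_s(y)$ and apply the tower property together with the Markov property of $Z$: conditionally on $\mathcal F_{t-s}$, $\tau \mapsto Z_\tau^{Z_{t-s}^x}$ has the same law as $\tau \mapsto Z_{t-s+\tau}^x$. This will convert each occurrence of $Z_{s-s_i}^y$ into $Z_{t-s_i}^x$, turn $Z_s^y$ into $Z_t^x$, and identify $y$ itself with $Z_{t-s}^x = Z_{t-s_{n+1}}^x$ after renaming the outer integration variable $s$ as $s_{n+1}$. The careful matching of these time shifts to the $\Lambda(s_i - s_{i-1})$ and $Q_{s_i-s_{i-1}}^{-1/2}$ factors is the main obstacle; it is the direct generalization of the passage from Lemma \ref{lem-sec-term-2} to Proposition \ref{prop-second-iteration}.

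The two uniform bounds will follow from a short independent induction using only the recursion and Proposition \ref{derivative-formula}. The contractivity $\|S_{t-s}f\|_\infty \leq \|f\|_\infty$, the derivative estimate $\|D(S_{t-s}f)\|_\infty \leq \|f\|_\infty\,\|\Lambda(t-s)\|_{\mathcal L(H)}$, and the elementary inequality $\|k^n_s\|_\infty \leq \|B\|_\infty\,\|Dv^n(s)\|_\infty$, combined with the inductive hypothesis for $\|Dv^n(s)\|_\infty$, give the announced simplex integrals for $\|v^{n+1}(t)\|_\infty$ and $\|Dv^{n+1}(t)\|_\infty$ after one further time integration. This part does not require the Markov expansion and can serve as a sanity check in parallel with the proof of the formula.
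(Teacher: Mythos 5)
Your proposal follows essentially the same route as the paper: a joint induction on the closed-form expressions for $v^n$ and $k^n$, driven by the recursion $k^n_s(y)=\int_0^s\mathbb{E}\big[k^{n-1}_r(Z^y_{s-r})\,\big\langle\Lambda(s-r)B(y),Q_{s-r}^{-1/2}(Z^y_{s-r}-e^{(s-r)A}y)\big\rangle\big]\,\d r$ obtained from Proposition \ref{derivative-formula}, followed by the Markov/tower substitution $y=Z^x_{t-s}$, with the norm bounds extracted from the same recursion via $\|k^n_s\|_\infty\le\|B\|_\infty\|Dv^n(s)\|_\infty$. The only slip is a harmless miscount: the iterated formula for $k^n_s(y)$ is an $n$-fold simplex integral whose integrand carries $n+1$ (not $n$) inner-product factors --- the extra one coming from the outermost application of the derivative formula --- which is exactly what makes the final substitution yield the correct $(n+1)$-factor expression for $v^{n+1}$.
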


\begin{proof}
We proceed by induction. Indeed, in view of the proofs in Section \ref{subsec-second-term}, we shall also prove inductively the formula
  $$\aligned
  k^{n}_t(x)=&\, \int_{0}^{t} \d s_n \int_0^{s_n} \d s_{n-1} \cdots \int_{0}^{s_2} \d s_1 \\
  &\ \E\Bigg[u_0( Z_t^x)\prod_{i=1}^{n+1} \Big\<\Lambda(s_i- s_{i-1}) B(Z_{t-s_i}^x), Q_{s_i-s_{i-1}}^{-1/2} \big( Z_{t-s_{i-1}}^x - e^{(s_i- s_{i-1}) A} Z_{t-s_i}^x \big) \Big\> \Bigg],
  \endaligned $$
where $s_0=0$ and $s_{n+1} =t$. The discussions in Sections \ref{subsec-derivative-formula} and \ref{subsec-second-term} show that the assertions on $v$ hold for $n=1,\, 2$, and the above formula of $k$ holds with $n=1$. Now we assume the assertions on $v$ (resp. on $k$) hold for $n$ (resp. for $n-1$), and try to prove them in the next iteration.

By the induction hypotheses, we have $v^n(s)\in C_b^1(H)$ for all $s>0$ and thus, by the definition of the iteration \eqref{new-iteration}, $k^n_s \in \mathcal B(H)$ with
  $$\aligned
  \big\| k^n_s \big\|_\infty &\leq \|B\|_\infty \|D v^n(s)\|_\infty\\
  &\leq \|u_0\|_\infty \|B\|_\infty^{n+1} \int_{0}^{s} \d s_n \int_0^{s_n} \d s_{n-1} \cdots \int_{0}^{s_2} \d s_1 \, \prod_{i=1}^{n+1} \|\Lambda(s_i-s_{i-1})\|_{\mathcal L(H)},
  \endaligned $$
where $s_{n+1} =s$. Proposition \ref{derivative-formula} implies $S_{t-s} k^n_s\in C_b^1(H)$ for all $s\in (0,t)$, and from the formula
  $$v^{n+1}(t,x) = \int_0^t \big(S_{t-s} k^n_s \big)(x) \,\d s $$
we deduce readily the estimates on $\|v^{n+1}(t)\|_\infty$ and $\|D v^{n+1}(t)\|_\infty$.

Next we prove the formula for $k^n_t(x)$ (note that the induction hypothesis gives us the expression of $k^{n-1}_t(x)$). We have
  \begin{equation}\label{iteration-1}
  \aligned
  k^n_t(x) &= \<B(x), D v^n(t,x)\> = \int_0^t \big\<B(x), D\big( S_{t-s} k^{n-1}_s\big) (x)\big\>\,\d s\\
  &= \int_0^t \E \Big[ k^{n-1}_s(Z^x_{t-s}) \big\< \Lambda(t-s) B(x), Q_{t-s}^{-1/2} (Z^x_{t-s} - e^{(t-s)A} x) \big\> \Big]\,\d s,
  \endaligned
  \end{equation}
where we used Proposition \ref{derivative-formula} in the last step. By the induction hypothesis,
  $$\aligned
  k^{n-1}_s(y)=&\, \int_{0}^{s} \d s_{n-1} \int_0^{s_{n-1}} \d s_{n-2} \cdots \int_{0}^{s_2} \d s_1 \\
  &\ \E\Bigg[u_0( Z_s^y)\prod_{i=1}^{n} \Big\<\Lambda(s_i- s_{i-1}) B(Z_{s-s_i}^y), Q_{s_i-s_{i-1}}^{-1/2} \big( Z_{s-s_{i-1}}^y - e^{(s_i- s_{i-1}) A} Z_{s-s_i}^y \big) \Big\> \Bigg],
  \endaligned $$
where $s_0=0$ and $s_{n} =s$. Therefore, by the Markov property,
  $$\aligned
  &\ k^{n-1}_s(Z^x_{t-s})\\
  =&\, \int_{0}^{s} \d s_{n-1} \int_0^{s_{n-1}} \d s_{n-2} \cdots \int_{0}^{s_2} \d s_1 \\
  &\ \E\Bigg[u_0( Z_t^x)\prod_{i=1}^{n} \Big\<\Lambda(s_i- s_{i-1}) B(Z_{t-s_i}^x), Q_{s_i-s_{i-1}}^{-1/2} \big( Z_{t-s_{i-1}}^x - e^{(s_i- s_{i-1}) A} Z_{t-s_i}^x \big) \Big\> \bigg| \mathcal F_{t-s} \Bigg].
  \endaligned $$
Inserting this identity into \eqref{iteration-1} and noticing that $\big\< \Lambda(t-s) B(x), Q_{t-s}^{-1/2} (Z^x_{t-s} - e^{(t-s)A} x) \big\>$ is measurable with respect to $\mathcal F_{t-s}$, we obtain
  $$\aligned
  k^n_t(x) &= \int_0^t \d s \int_0^s \d s_{n-1} \cdots \int_0^{s_2} \d s_1 \, \E \Bigg\{ \big\< \Lambda(t-s) B(x), Q_{t-s}^{-1/2} (Z^x_{t-s} - e^{(t-s)A} x) \big\> \\
  &\hskip40pt \times u_0( Z_t^x)\prod_{i=1}^{n} \Big\<\Lambda(s_i- s_{i-1}) B(Z_{t-s_i}^x), Q_{s_i-s_{i-1}}^{-1/2} \big( Z_{t-s_{i-1}}^x - e^{(s_i- s_{i-1}) A} Z_{t-s_i}^x \big) \Big\> \Bigg\}.
  \endaligned $$
Renaming $s$ as $s_n$ gives us the formula of $k^n_t(x)$ in the new iteration for all $t>0$ and $x\in H$.

Finally we prove the expression for $v^{n+1}(t,x)$. We have
  $$v^{n+1}(t,x) = \int_0^t \big(S_{t-s} k^n_s \big)(x) \,\d s = \int_0^t \E \big[ k^n_s(Z^x_{t-s}) \big] \,\d s.$$
Using the formula we have just proved for $k^n_s(y)$ and the Markov property, we can obtain the expression for $v^{n+1}(t,x)$ in a similar way as above.
\end{proof}

We give a slightly different formula which is more appropriate for numerical purpose.

\begin{corollary}\label{cor-iteration-new}
For any $n\geq 1$,
  \begin{equation}\label{iteration-new}
  \aligned
  v^{n} (t,x ) =& \int_{0}^{t} \d r_n \int_0^{r_n} \d r_{n-1} \cdots \int_{0}^{r_2} \d r_1\\
  &\ \E\Bigg[ u_0( Z_t^x) \prod_{i=1}^n \Big\<\Lambda(r_{i+1}- r_{i}) B\big(Z_{r_i}^x \big), Q_{r_{i+1}- r_{i}}^{-1/2} \big( Z_{r_{i+1}}^x - e^{(r_{i+1}- r_{i}) A} Z_{r_i}^x \big) \Big\> \Bigg],
  \endaligned
  \end{equation}
where $r_{n+1} =t$. Accordingly,
  $$\aligned
  \|v^n(t)\|_\infty \leq \|u_0\|_\infty \|B\|_\infty^n \int_{0}^{t} \d r_n \int_0^{r_n} \d r_{n-1} \cdots \int_{0}^{r_2} \d r_1 \, \prod_{i=1}^n \|\Lambda(r_{i+1}-r_i)\|_{\mathcal L(H)}
  \endaligned$$
and, setting $r_0=0$,
  $$\|D v^n(t)\|_\infty \leq \|u_0\|_\infty \|B\|_\infty^n \int_{0}^{t} \d r_n \int_0^{r_n} \d r_{n-1} \cdots \int_{0}^{r_2} \d r_1 \, \prod_{i=0}^n \|\Lambda(r_{i+1}-r_i)\|_{\mathcal L(H)}.$$
\end{corollary}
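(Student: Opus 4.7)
The plan is to view Corollary \ref{cor-iteration-new} as a reformulation of Theorem \ref{thm-iteration} obtained via the time reversal $r_j = t - s_{n+1-j}$ for $j = 1, \ldots, n$, together with the boundary conventions $r_{n+1} = t$ (matching $s_0 = 0$) and, only for the derivative estimate, $r_0 = 0$ (matching $s_{n+1} = t$).

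First, I would check that this affine bijection sends the simplex $\{0 < s_1 < \cdots < s_n < t\}$ onto $\{0 < r_1 < \cdots < r_n < t\}$ with Jacobian of absolute value one, so Lebesgue measure is preserved. Up to relabeling the dummy variables, this identifies $\int_0^t \d s_n \int_0^{s_n} \d s_{n-1} \cdots \int_0^{s_2} \d s_1$ with $\int_0^t \d r_n \int_0^{r_n} \d r_{n-1} \cdots \int_0^{r_2} \d r_1$.

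Next, I would verify that the $i$-th factor in the product of \eqref{iteration} coincides with the $k$-th factor of \eqref{iteration-new} with $k = n+1-i$. Under the substitution one has $s_i - s_{i-1} = r_{k+1} - r_k$, $t - s_i = r_k$ and $t - s_{i-1} = r_{k+1}$, whence $\Lambda(s_i - s_{i-1}) B(Z^x_{t - s_i}) = \Lambda(r_{k+1} - r_k) B(Z^x_{r_k})$ and $Z^x_{t - s_{i-1}} - e^{(s_i - s_{i-1}) A} Z^x_{t - s_i} = Z^x_{r_{k+1}} - e^{(r_{k+1} - r_k) A} Z^x_{r_k}$. Since the factors in $\prod_{i=1}^n$ are scalars, their product does not depend on the order in which they are listed, and the product over $i$ coincides with the product over $k$, yielding \eqref{iteration-new}.

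Finally, the two sup-norm bounds are obtained by applying the same change of variables to the estimates in Theorem \ref{thm-iteration}. Each $\|\Lambda(s_i - s_{i-1})\|_{\mathcal L(H)}$ becomes $\|\Lambda(r_{k+1} - r_k)\|_{\mathcal L(H)}$, and for the derivative bound the extra factor $\|\Lambda(t - s_n)\|_{\mathcal L(H)}$ coming from $s_{n+1} = t$ maps to the $i = 0$ term $\|\Lambda(r_1 - r_0)\|_{\mathcal L(H)}$ once we set $r_0 = 0$. No substantive obstacle arises; the only care needed is the bookkeeping at the boundary indices $s_0 = 0, \, s_{n+1} = t$ versus $r_{n+1} = t, \, r_0 = 0$.
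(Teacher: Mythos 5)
Your proposal is correct and follows exactly the paper's own argument: the time reversal $r_i = t - s_{n+1-i}$ applied to Theorem \ref{thm-iteration}, with the same index bookkeeping $s_i - s_{i-1} = r_{n+2-i} - r_{n+1-i}$ and the same treatment of the boundary conventions and of the two sup-norm estimates.
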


\begin{proof}
We change variables as follows:
  $$r_i = t- s_{n+1-i},\quad 1\leq i\leq n.$$
The domain of integration becomes
  $$\big\{(r_1,\cdots, r_n): 0< r_1 <\cdots< r_n <t \big\}; $$
and $s_i -s_{i-1}= r_{n+2-i} - r_{n+1-i},\, 1\leq i\leq n$. Therefore, by \eqref{iteration},
  $$\aligned
  v^{n} (t,x ) =& \int_{0}^{t} \d r_n \int_0^{r_n} \d r_{n-1} \cdots \int_{0}^{r_2} \d r_1\\
  &\ \E\Bigg[ u_0( Z_t^x) \prod_{i=1}^n \Big\<\Lambda(r_{n+2-i} - r_{n+1-i}) B\big(Z_{r_{n+1-i}}^x \big),\\
  &\hskip50pt Q_{r_{n+2-i} - r_{n+1-i}}^{-1/2} \big( Z_{r_{n+2-i}}^x - e^{(r_{n+2-i} - r_{n+1-i}) A} Z_{r_{n+1-i}}^x \big) \Big\> \Bigg].
  \endaligned $$
In the product, letting $j=n+1- i $, we get the desired formula \eqref{iteration-new}. The proofs of the two estimates are similar.
\end{proof}

\begin{remark}
Due to the convolution structure \eqref{new-iteration}, it seems that \eqref{iteration-new} is not suitable for the induction argument in the proof of Theorem \ref{thm-iteration}.
\end{remark}

\subsection{Convergence of the iteration scheme \eqref{new-iteration}}

We need the following technical result, where we use the Gamma function $\Gamma(\alpha)$:
  $$\Gamma(\alpha) = \int_0^\infty \theta^{\alpha -1} e^{-\theta}\,\d\theta, \quad \alpha >0. $$

\begin{lemma}\label{lem-technical}
Assume $\delta\in (0,1)$ and $n\geq 1$. Let $r_0=0$ and $r_{n+1}=t$. One has
  $$\int_{0}^{t} \d r_n \int_0^{r_n} \d r_{n-1} \cdots \int_{0}^{r_2} \d r_1 \prod_{i=1}^n \frac1{(r_{i+1} -r_{i})^\delta } = \frac{\Gamma(1-\delta)^n} {\Gamma(1+n(1-\delta))} t^{n(1-\delta)} $$
and
  $$\int_{0}^{t} \d r_n \int_0^{r_n} \d r_{n-1} \cdots \int_{0}^{r_2} \d r_1 \prod_{i=0}^n \frac1{(r_{i+1} -r_{i})^\delta } = \frac{\Gamma(1-\delta)^{n+1}} {\Gamma((n+1) (1-\delta))} t^{n(1-\delta)- \delta} .$$
\end{lemma}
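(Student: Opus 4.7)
The plan is to establish both identities by induction on $n$, with the single workhorse being the Beta identity
$$\int_0^t (t-r)^{-\delta}\, r^{a}\,\d r = \frac{\Gamma(a+1)\,\Gamma(1-\delta)}{\Gamma(a+2-\delta)}\, t^{a+1-\delta}, \qquad a>-1,\ \delta<1.$$
The key observation that makes the induction go through is that in both iterated integrals, the factor $(r_{n+1}-r_n)^{-\delta}=(t-r_n)^{-\delta}$ is independent of $r_1,\dots,r_{n-1}$, so one may pull it outside the inner integrals, and what remains is exactly the same integral with $n$ replaced by $n-1$ and with the endpoint $t$ replaced by $r_n$. This self-similarity is what lets a one-variable Beta computation power the whole induction.

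For the first identity, denote its left-hand side by $I_n(t)$. The base case $n=1$ is $\int_0^t (t-r_1)^{-\delta}\,\d r_1 = t^{1-\delta}/(1-\delta)$, which matches the claim since $\Gamma(2-\delta)=(1-\delta)\Gamma(1-\delta)$. For the inductive step I would write
$$I_n(t) = \int_0^t (t-r_n)^{-\delta}\, I_{n-1}(r_n)\,\d r_n,$$
insert the inductive hypothesis $I_{n-1}(r_n) = \Gamma(1-\delta)^{n-1}\, r_n^{(n-1)(1-\delta)}/\Gamma(1+(n-1)(1-\delta))$, and apply the Beta identity above with $a=(n-1)(1-\delta)$. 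Two simple algebraic telescopings, $(n-1)(1-\delta)+1-\delta=n(1-\delta)$ and $\Gamma(a+2-\delta)=\Gamma(1+n(1-\delta))$, then produce precisely the claimed right-hand side.

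The second identity, with left-hand side $\tilde I_n(t)$, is treated in the same way; the extra factor $r_1^{-\delta}=(r_1-r_0)^{-\delta}$ is absorbed into the induction from the outset. The base case $n=1$ is the Beta integral $B(1-\delta,1-\delta)\,t^{1-2\delta}=\Gamma(1-\delta)^2 t^{1-2\delta}/\Gamma(2(1-\delta))$, matching the claim. The inductive step uses $\tilde I_n(t)=\int_0^t (t-r_n)^{-\delta}\tilde I_{n-1}(r_n)\,\d r_n$, to which I apply the Beta identity with $a=(n-1)(1-\delta)-\delta$; the resulting exponent is $n(1-\delta)-\delta$ and the Gamma ratio collapses to $\Gamma(1-\delta)^{n+1}/\Gamma((n+1)(1-\delta))$, as claimed.

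No step is a genuine obstacle: both identities are classical, and everything reduces to checking that certain exponents telescope correctly, which they do automatically. The only side condition to monitor is that the Gamma arguments remain positive throughout, which holds because $\delta\in(0,1)$. As a sanity check one may rederive both formulae in one stroke by the change of variables $y_i = r_{i+1}-r_i$ for $i=0,\dots,n$ (with $r_0=0$ and $r_{n+1}=t$), which has unit Jacobian, maps the domain onto the simplex $\{y_i\ge 0,\ \sum_{i=0}^{n} y_i = t\}$, and turns the two integrals into Dirichlet integrals with parameters $(1,1-\delta,\dots,1-\delta)$ and $(1-\delta,\dots,1-\delta)$ respectively; the standard Dirichlet formula then gives $t^{\sum a_i-1}\prod_i \Gamma(a_i)/\Gamma(\sum_i a_i)$, which is exactly the two right-hand sides.
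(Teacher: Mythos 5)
Your proof is correct and takes essentially the same route as the paper: both arguments induct on $n$, peel off the outermost integral in $r_n$, and evaluate it as a Beta integral, the only cosmetic difference being that you convert Beta to Gamma ratios at each step so the telescoping happens inside the induction, whereas the paper accumulates a product of Beta functions and telescopes once at the end. Your closing remark that a single simplex change of variables $y_i=r_{i+1}-r_i$ reduces both identities to the Dirichlet integral formula is a nice, genuinely shorter alternative, though you present it only as a sanity check.
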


\begin{proof}
First we prove
  \begin{equation}\label{beta-equality}
  \int_{0}^{t} \d r_n \int_0^{r_n} \d r_{n-1} \cdots \int_{0}^{r_2} \d r_1 \prod_{i=1}^n \frac1{(r_{i+1} -r_i)^\delta } = t^{n(1-\delta)} \prod_{i=1}^n B\big(1-\delta, 1+(i-1)(1-\delta) \big),
  \end{equation}
where $B(\alpha, \beta)$ is the Beta function:
  $$B(\alpha, \beta) = \int_0^1 \theta^{\alpha -1} (1-\theta)^{\beta -1}\,\d\theta, \quad \alpha, \beta>0. $$
We proceed by induction. For $n=1$, noting that $r_2=t$, we change the variable $\theta= r_1/t$ and get
  $$\int_0^t \frac{\d r_1}{(t -r_1)^\delta} = t^{1-\delta} \int_0^1 \frac{\d \theta}{(1 -\theta)^\delta} = t^{1-\delta} \int_0^1 \theta^0 (1-\theta)^{-\delta} \,\d \theta = t^{1-\delta} B(1-\delta, 1). $$
Therefore the equality holds when $n=1$. Now suppose the equality holds for $n-1$, we prove it for $n$. By the induction hypothesis,
  $$\int_0^{r_n} \d r_{n-1} \cdots \int_{0}^{r_2} \d r_1 \prod_{i=1}^{n-1} \frac1{(r_{i+1} -r_i)^\delta} = r_n^{(n-1)(1-\delta)} \prod_{i=1}^{n-1} B\big(1-\delta, 1+(i-1)(1-\delta) \big),$$
thus, noticing that $r_{n+1} =t$,
  $$\int_{0}^{t} \d r_n \int_0^{r_n} \d r_{n-1} \cdots \int_{0}^{r_2} \d r_1 \prod_{i=1}^n \frac1{(r_{i+1} -r_i)^\delta} = \prod_{i=1}^{n-1} B\big(1-\delta, 1+(i-1)(1-\delta) \big) \int_0^t \frac{r_n^{(n-1)(1-\delta)}}{(t-r_n)^\delta}\,\d r_n .$$
We have, by changing variable $\theta = r_n/t$,
  $$\int_0^t \frac{r_n^{(n-1)(1-\delta)}}{(t-r_n)^\delta}\,\d r_n = t^{n(1-\delta)} \int_0^1 \theta^{(n-1)(1-\delta)} (1-\theta)^{-\delta} \,\d\theta = t^{n(1-\delta)} B\big(1-\delta, 1+(n-1)(1-\delta) \big) . $$
Substituting this result into the previous one gives us the identity \eqref{beta-equality}.

Next, it is well known that
  $$B(\alpha, \beta) = \frac{\Gamma(\alpha) \Gamma(\beta)}{\Gamma(\alpha+\beta)}.$$
Therefore,
  $$\prod_{i=1}^n B\big(1-\delta, 1+(i-1)(1-\delta) \big) =  \prod_{i=1}^n \frac{\Gamma(1-\delta) \Gamma(1+(i-1)(1-\delta))} {\Gamma(1+i(1-\delta))} = \frac{\Gamma(1-\delta)^n}{\Gamma(1+n(1-\delta)} . $$
Combining this with \eqref{beta-equality} we obtain the desired formula.

The proof of the second identity is similar, by first establishing the identity
  $$\int_{0}^{t} \d r_n \int_0^{r_n} \d r_{n-1} \cdots \int_{0}^{r_2} \d r_1 \prod_{i=1}^n \frac1{(r_{i+1} -r_i)^\delta } = t^{n(1-\delta) -\delta} \prod_{i=1}^n B\big(1-\delta, i(1-\delta) \big).$$
We omit the details here.
\end{proof}

As a consequence, we have the following estimates.

\begin{corollary}\label{cor-estimate}
Under the Hypotheses \ref{hypothe} and \ref{hypothe-1}, for any $n\geq 0$ and $t>0$,
 \begin{equation}\label{eq:cor-estimate}
 \|v^{n} (t)\|_\infty \leq \|u_0\|_\infty \|B\|_\infty^n C_\delta^n t^{n(1-\delta)} \frac{\Gamma(1-\delta)^n} {\Gamma(1+n(1-\delta))}
 \end{equation}
and
  $$\|D v^n(t)\|_\infty \leq \|u_0\|_\infty \|B\|_\infty^n C_\delta^{n+1} t^{n(1-\delta) -\delta} \frac{\Gamma(1-\delta)^{n+1}} {\Gamma((n+1) (1-\delta))}.$$
\end{corollary}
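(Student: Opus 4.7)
The plan is essentially to combine the quantitative integral estimates produced in Corollary \ref{cor-iteration-new} with the pointwise bound on $\|\Lambda(\cdot)\|_{\mathcal L(H)}$ from Hypothesis \ref{hypothe}(iv), and then read off the resulting multi-parameter integral directly from Lemma \ref{lem-technical}. No further induction or probabilistic computation is needed, since all of that work has already been absorbed into Corollary \ref{cor-iteration-new}.

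Concretely, for $n\geq 1$ I would start from
$$\|v^n(t)\|_\infty \leq \|u_0\|_\infty \|B\|_\infty^n \int_{0}^{t} \d r_n \int_0^{r_n} \d r_{n-1} \cdots \int_{0}^{r_2} \d r_1 \, \prod_{i=1}^n \|\Lambda(r_{i+1}-r_i)\|_{\mathcal L(H)}$$
with the convention $r_{n+1}=t$, and then insert Hypothesis \ref{hypothe}(iv) to replace each $\|\Lambda(r_{i+1}-r_i)\|_{\mathcal L(H)}$ by $C_\delta (r_{i+1}-r_i)^{-\delta}$. Pulling $C_\delta^n$ out of the integral and invoking the first identity of Lemma \ref{lem-technical} yields exactly $\|u_0\|_\infty \|B\|_\infty^n C_\delta^n t^{n(1-\delta)} \Gamma(1-\delta)^n/\Gamma(1+n(1-\delta))$, which is \eqref{eq:cor-estimate}. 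The derivative estimate is obtained in the same way, starting instead from the second bound in Corollary \ref{cor-iteration-new} (which has $n+1$ factors in the product, with $r_0=0$), extracting $C_\delta^{n+1}$, and using the second identity of Lemma \ref{lem-technical}.

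To cover the base case $n=0$, I would verify it separately and directly from the definitions: since $v^0(t,x)=\E u_0(Z_t^x) = (S_t u_0)(x)$, one has $\|v^0(t)\|_\infty \leq \|u_0\|_\infty$, which matches the stated bound for $n=0$ (the right-hand side reduces to $\|u_0\|_\infty$, as $C_\delta^0 t^0 \Gamma(1-\delta)^0/\Gamma(1)=1$). Likewise, by \eqref{derivative-formula.2} of Proposition \ref{derivative-formula} together with Hypothesis \ref{hypothe}(iv), $\|Dv^0(t)\|_\infty = \|DS_t u_0\|_\infty \leq \|u_0\|_\infty\|\Lambda(t)\|_{\mathcal L(H)} \leq \|u_0\|_\infty C_\delta t^{-\delta}$, agreeing with the stated bound for $n=0$ since $\Gamma(1-\delta)/\Gamma(1-\delta)=1$.

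Since the heavy lifting has already been carried out in Theorem \ref{thm-iteration}, Corollary \ref{cor-iteration-new} and Lemma \ref{lem-technical}, there is no real obstacle here; the only point worth a moment of care is bookkeeping — keeping track of whether the product ranges over $i=1,\dots,n$ (for $v^n$) or $i=0,\dots,n$ (for $Dv^n$), and consequently whether the constant is $C_\delta^n$ or $C_\delta^{n+1}$ and which of the two identities of Lemma \ref{lem-technical} applies. Once that indexing is kept straight, the two inequalities in the corollary follow by substitution.
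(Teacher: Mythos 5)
Your proposal is correct and is essentially identical to the paper's own (very short) proof: the paper likewise disposes of $n=0$ via \eqref{derivative-formula.2} and obtains the general case by combining Corollary \ref{cor-iteration-new} with Lemma \ref{lem-technical}. Your bookkeeping of the $i=1,\dots,n$ versus $i=0,\dots,n$ products and the corresponding powers of $C_\delta$ is accurate.
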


\begin{proof}
The case $n=0$ follows directly from \eqref{derivative-formula.2}. Combining Lemma \ref{lem-technical} and Corollary \ref{cor-iteration-new}, we obtain the general cases.
\end{proof}

Now we can prove the existence of limit for the iteration scheme \eqref{new-iteration}.

\begin{proposition}\label{prop-series}
Assume the Hypotheses \ref{hypothe} and \ref{hypothe-1}. For any $T>0$, the series
  $$\sum_{n=0}^\infty v^n(t,x) $$
converge uniformly on $[0,T]\times H$. Moreover, for any $t_0\in (0,T)$, the series
  $$\sum_{n=0}^\infty D v^n(t,x)$$
converge uniformly on $[t_0, T]\times H$.
\end{proposition}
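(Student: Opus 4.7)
The plan is to deduce this proposition entirely from the sharp bounds already established in Corollary \ref{cor-estimate}, by recognizing the resulting majorants as terms of a Mittag--Leffler-type series, which is known to converge for every argument.

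First I would apply Corollary \ref{cor-estimate} to get, for $t\in[0,T]$ and any $x\in H$,
\[
|v^n(t,x)|\;\leq\;\|u_0\|_\infty\,\frac{\bigl(\|B\|_\infty\,C_\delta\,\Gamma(1-\delta)\,T^{1-\delta}\bigr)^n}{\Gamma\bigl(1+n(1-\delta)\bigr)}
\;=:\;M_n,
\]
where I bounded $t^{n(1-\delta)}$ by $T^{n(1-\delta)}$. Setting $a:=\|B\|_\infty\,C_\delta\,\Gamma(1-\delta)\,T^{1-\delta}$ and $\beta:=1-\delta\in(0,1)$, the numerical majorant is $\|u_0\|_\infty\sum_n a^n/\Gamma(1+n\beta)$, which is exactly the Mittag--Leffler function $E_\beta(a)$. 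To justify its finiteness without invoking that literature one can run the ratio test: using the classical asymptotics $\Gamma(x+\beta)/\Gamma(x)\sim x^\beta$ as $x\to\infty$, one gets $M_{n+1}/M_n\sim a\,(n\beta)^{-\beta}\to 0$, so $\sum_n M_n<\infty$. The Weierstrass $M$-test then yields uniform convergence of $\sum_n v^n$ on $[0,T]\times H$.

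For the derivative series on $[t_0,T]\times H$ I would use the second bound in Corollary \ref{cor-estimate}. The only new feature is the factor $t^{n(1-\delta)-\delta}$, which I would split as $t^{-\delta}\cdot t^{n(1-\delta)}$ and bound by $t_0^{-\delta}T^{n(1-\delta)}$ on $[t_0,T]$. This gives
\[
\|Dv^n(t)\|_\infty\;\leq\;\|u_0\|_\infty\,C_\delta\,\Gamma(1-\delta)\,t_0^{-\delta}\,\frac{\bigl(\|B\|_\infty\,C_\delta\,\Gamma(1-\delta)\,T^{1-\delta}\bigr)^n}{\Gamma\bigl((n+1)(1-\delta)\bigr)}
\;=:\;N_n,
\]
and the same ratio-test argument (now with $\Gamma((n+1)\beta)$ in the denominator) shows $\sum_n N_n<\infty$. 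Another application of the Weierstrass $M$-test gives uniform convergence of $\sum_n Dv^n$ on $[t_0,T]\times H$.

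I do not expect a real obstacle: once Corollary \ref{cor-estimate} is in hand, the proof is a direct domination by a constant multiple of a Mittag--Leffler series, whose entire behavior in its argument is the key mechanism ensuring that even the super-exponentially growing numerator $(\|B\|_\infty C_\delta\Gamma(1-\delta)T^{1-\delta})^n$ is tamed by the Gamma factor in the denominator. The minor care-point is the singular factor $t^{-\delta}$ in the derivative bound, which forces us to stay away from $t=0$ and is precisely the reason the conclusion for $Dv^n$ is only stated on $[t_0,T]$ rather than on $[0,T]$.
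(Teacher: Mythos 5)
Your proposal is correct and follows essentially the same route as the paper: dominate $v^n$ (resp.\ $Dv^n$) by the bounds of Corollary \ref{cor-estimate}, replace $t^{n(1-\delta)}$ by $T^{n(1-\delta)}$ (and $t^{-\delta}$ by $t_0^{-\delta}$), and conclude by the ratio test plus the Weierstrass $M$-test. The only cosmetic difference is that you invoke the standard asymptotics $\Gamma(x+\beta)/\Gamma(x)\sim x^{\beta}$ to see that the ratio $\Gamma(1+n(1-\delta))/\Gamma(1+(n+1)(1-\delta))$ tends to zero, whereas the paper derives this limit by a self-contained elementary estimate on logarithms of the Gamma functional-equation factors.
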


\begin{proof}
We only prove the first assertion; the proof of the second one is similar. By Corollary \ref{cor-estimate} and using the ratio test, it is sufficient to show that
  $$\lim_{n\to \infty} \frac{\Gamma(1+n(1-\delta))}{\Gamma(1+(n+1)(1-\delta))}=0. $$
This follows from elementary calculations. Indeed, setting $\alpha =1-\delta$ for simplicity of notation,
  $$\frac{\Gamma(1+n\alpha)}{\Gamma(1+(n+1)\alpha)} = \frac{n\alpha}{(n+1)\alpha} \cdot \frac{n\alpha-1}{(n+1)\alpha -1} \cdots \frac{1+(n\alpha)}{1+\alpha +(n\alpha)} \cdot \frac{\Gamma((n\alpha))}{\Gamma(\alpha+ (n\alpha))}, $$
where $(n\alpha)$ is the decimal part of $n\alpha$. Using the simple inequality $\log(1+x) <x$ for all $x\in (-1,0)$, we have
  $$\log\bigg(\frac{n\alpha-k}{(n+1)\alpha -k} \bigg)= \log\bigg(1- \frac{\alpha}{(n+1)\alpha -k} \bigg) < -\frac{\alpha}{(n+1)\alpha -k}.$$
Hence,
  $$\log \frac{\Gamma(1+n\alpha)}{\Gamma(1+(n+1)\alpha)} < -\alpha \bigg(\frac{1}{(n+1)\alpha} + \frac{1}{(n+1)\alpha -1} + \cdots + \frac{1}{1+\alpha +(n\alpha)} \bigg) +\log\frac{\Gamma((n\alpha))}{\Gamma(\alpha+ (n\alpha))} . $$
Note that the first part on the right hand side tends to $-\infty$ as $n\to \infty$, while the last part is uniformly bounded in $n$, thus we conclude the result.
\end{proof}

Thanks to Proposition \ref{prop-series}, we can define the limit
  $$u(t,x) = \lim_{n\to \infty} u^n(t,x)= \lim_{n\to \infty} \sum_{i=0}^n v^i(t,x);$$
moreover, for any $t>0$, one has $u(t)\in C_b^1(H)$ and
  $$D u(t,x) = \lim_{n\to \infty} D u^n(t,x) =\lim_{n\to \infty} \sum_{i=0}^n Dv^i(t,x)$$
which holds uniformly on $[t_0, T]\times H$ for any $0<t_0 <T$. Finally we can prove the main result.

\begin{theorem}\label{thm-main-result}
The limit $u(t,x)$ is the unique solution to the Kolmogorov equation \eqref{KolE} in the following sense:
\begin{itemize}
\item[\rm (a)] for any $T>0$, $u(t,x)$ is uniformly bounded for $(t,x)\in [0,T]\times H$, and $u(t)\in C_b^1(H)$ for any $t>0$;
\item[\rm (b)] for any $T>0$, one has $\int_0^T \|D u(t)\|_\infty \,\d t<\infty$;
\item[\rm (c)] it satisfies the mild formulation \eqref{mild-sol} for any $t>0$ and $x\in H$.
\end{itemize}
\end{theorem}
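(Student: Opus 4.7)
The plan is to prove (a), (b), (c) by carefully passing to the limit in the iteration \eqref{new-iteration} using the estimates already collected, and then to handle uniqueness via a singular Gronwall argument.

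First I would establish (a) and (b) by summation of the bounds in Corollary \ref{cor-estimate}. For (a), the uniform boundedness on $[0,T]\times H$ is immediate from $\sum_n \|v^n(t)\|_\infty < \infty$, since by the ratio-test calculation carried out in Proposition \ref{prop-series} the series converges uniformly in $t\in[0,T]$. The fact that $u(t)\in C_b^1(H)$ for each $t>0$ follows from the uniform convergence of $\sum_n D v^n(t,x)$ on $[t_0,T]\times H$ for every $0<t_0<T$ (a standard exchange of derivative and uniform limit). For (b), the bound $\|Dv^n(t)\|_\infty \le C(n)\, t^{n(1-\delta)-\delta}$ in Corollary \ref{cor-estimate} is integrable on $[0,T]$ for every $n\ge 0$ since $\delta<1$, and a second ratio-test argument identical in spirit to the one in Proposition \ref{prop-series} shows $\sum_n \int_0^T \|Dv^n(t)\|_\infty\,\d t <\infty$, so $\int_0^T \|Du(t)\|_\infty\,\d t<\infty$.

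Next I would prove (c) by taking the limit $n\to\infty$ in
\[
u^{n+1}(t,x) = (S_t u_0)(x) + \int_0^t \big(S_{t-s}\<B, Du^n(s)\>\big)(x)\,\d s.
\]
The left-hand side tends to $u(t,x)$ by definition. For the right-hand side, fix $t>0$; by the uniform convergence of $Du^n(s,\cdot)$ on $H$ for each $s\in(0,t)$ together with $\|B\|_\infty<\infty$, the integrand $\big(S_{t-s}\<B,Du^n(s)\>\big)(x)$ converges pointwise in $s$ to $\big(S_{t-s}\<B,Du(s)\>\big)(x)$. The dominating function is $s\mapsto \|B\|_\infty \sum_{i\ge 0}\|Dv^i(s)\|_\infty$, which is in $L^1(0,t)$ by (b). Dominated convergence then yields \eqref{mild-sol}.

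Finally, for uniqueness, suppose $u_1,u_2$ are two mild solutions satisfying (a)--(c), and set $w=u_1-u_2$. Then
\[
w(t,x) = \int_0^t \big(S_{t-s}\<B, Dw(s)\>\big)(x)\,\d s,
\]
and Proposition \ref{derivative-formula} together with $\|B\|_\infty<\infty$ gives
\[
\|Dw(t)\|_\infty \le \|B\|_\infty \int_0^t \|\Lambda(t-s)\|_{\mathcal L(H)}\,\|Dw(s)\|_\infty\,\d s \le \|B\|_\infty C_\delta \int_0^t (t-s)^{-\delta}\|Dw(s)\|_\infty\,\d s.
\]
Since $\phi(s):=\|Dw(s)\|_\infty$ lies in $L^1(0,T)$ by (b), the generalized (singular kernel) Gronwall lemma forces $\phi\equiv 0$ on $[0,T]$, hence $Dw\equiv 0$ and consequently $w\equiv 0$ by the mild formulation itself.

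I expect the main technical obstacle to be the dominated-convergence step in (c): one must be sure that the $L^1(0,T)$ bound for $\|Du(t)\|_\infty$ built from (b) is strong enough to dominate the $n$-dependent integrands $\|Du^n(s)\|_\infty$ uniformly in $n$. This is what the partial-sum-of-series majorization $\|Du^n(s)\|_\infty\le\sum_{i=0}^n\|Dv^i(s)\|_\infty\le\sum_{i\ge 0}\|Dv^i(s)\|_\infty$ gives, provided the full series of sup-norms is $L^1$ in $s$, which is exactly the content of the second ratio-test argument used for (b). Everything else (telescoping, limit inside $S_{t-s}$, and the singular Gronwall step) is then routine.
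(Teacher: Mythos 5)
Your proposal is correct, and parts (a)--(c) follow essentially the same path as the paper: summing the bounds of Corollary \ref{cor-estimate} for (a) and (b) (the paper computes $\int_0^T t^{n(1-\delta)-\delta}\,\d t = T^{(n+1)(1-\delta)}/((n+1)(1-\delta))$ explicitly and sums), and passing to the limit in the iteration for (c). For (c) the paper estimates the tail $\|B\|_\infty\sum_{i\ge n}\int_0^t\|Dv^i(s)\|_\infty\,\d s\to 0$, which gives convergence \emph{uniform} in $(t,x)$, whereas your dominated-convergence argument gives it pointwise for each fixed $(t,x)$; both suffice for the claim. The genuine difference is in the uniqueness step. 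The paper avoids any Gronwall-type black box: it integrates the pointwise inequality for $|Dw(t,x)|$ in $t$, swaps the order of integration to bound $\int_0^t\|Dw(s)\|_\infty\,\d s \le \bigl[\|B\|_\infty\int_0^t\|\Lambda(s)\|_{\mathcal L(H)}\,\d s\bigr]\int_0^t\|Dw(r)\|_\infty\,\d r$, chooses $t_1$ small enough that the bracket is $<1$ to force $\int_0^{t_1}\|Dw\|_\infty=0$, and then iterates over the intervals $[kt_1,(k+1)t_1]$ using the semigroup property. Your route instead invokes the generalized Gronwall lemma for the weakly singular kernel $(t-s)^{-\delta}$ directly on $[0,T]$. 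This is valid and arguably shorter, but you should note one point of care: $\phi(s)=\|Dw(s)\|_\infty$ is only known to lie in $L^1(0,T)$ (it may blow up like $s^{-\delta}$ as $s\to 0$), while many textbook statements of singular Gronwall assume $\phi$ bounded or continuous; to make your version self-contained you would iterate the inequality $n$ times, observe that the iterated kernel is $b^n\Gamma(1-\delta)^n\,(t-s)^{n(1-\delta)-1}/\Gamma(n(1-\delta))$ with constant tending to $0$ (exactly the Beta-function computation of Lemma \ref{lem-technical}), and conclude $\phi=0$ a.e. The paper's elementary contraction argument buys self-containedness at the cost of the extra stepping over subintervals; your argument buys brevity at the cost of appealing to (or re-deriving) the singular Gronwall lemma for merely integrable functions.
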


\begin{proof}
Obviously our limit verifies (a). Next,
  $$\|D u(t)\|_\infty \leq \sum_{n=0}^\infty \big\| D v^{n}(t) \big\|_\infty \leq \|u_0\|_\infty \sum_{n=0}^\infty \|B\|_\infty^n C_\delta^{n+1} t^{n(1-\delta) -\delta} \frac{\Gamma(1-\delta)^{n+1}} {\Gamma((n+1) (1-\delta))}.$$
Therefore,
  \begin{equation}\label{thm-main-result.0}
  \aligned
  \int_0^T \|D u(t)\|_\infty \,\d t &\leq \|u_0\|_\infty \sum_{n=0}^\infty \|B\|_\infty^n C_\delta^{n+1}  \frac{ \Gamma(1- \delta)^{n+1}} {\Gamma((n+1) (1-\delta))} \int_0^T t^{n(1-\delta) -\delta} \,\d t \\
  &= \|u_0\|_\infty \sum_{n=0}^\infty \|B\|_\infty^n C_\delta^{n+1}  \frac{ \Gamma(1- \delta)^{n+1}} {\Gamma((n+1) (1-\delta))} \frac{T^{(n+1) (1-\delta)}}{(n+1) (1-\delta)},
  \endaligned
  \end{equation}
which shows that (b) is also satisfied. Moreover, for any $t>0$ and $x\in H$,
  $$\aligned
  \bigg|\int_0^t \big( S_{t-s} \<B, D u(s) \>\big)(x)\,\d s\bigg| \leq \int_0^t \|\<B, D u(s) \>\|_\infty \,\d s \leq \|B\|_\infty \int_0^t \|D u(s)\|_\infty \,\d s.
  \endaligned$$
This implies the integral in the signs of absolute value makes sense.

It remains to check that $u(t,x)$ verify \eqref{mild-sol}. By the iteration scheme \eqref{new-iteration}, one has, for any $n>1$,
  \begin{equation}\label{thm-main-result.1}
  u^n(t,x) = u^0(t,x) + \int_0^t \big( S_{t-s} \big\<B, D u^{n-1}(s) \big\>\big)(x)\,\d s \quad \mbox{for all } t>0,\, x\in H.
  \end{equation}
The left hand side converges uniformly to $u(t,x)$ on $[0,T]\times H$ for any $T>0$. It suffices to show the uniform convergence of the right hand side. We have
  $$\aligned
  &\ \bigg|\int_0^t \big( S_{t-s} \big\<B, D u^{n-1}(s) \big\>\big)(x)\,\d s - \int_0^t \big( S_{t-s} \<B, D u(s) \>\big)(x)\,\d s\bigg| \\
  \leq &\ \int_0^t \big\| \big\<B, D u^{n-1}(s)- D u(s) \big\> \big\|_\infty \,\d s\leq \|B\|_\infty \sum_{i=n}^\infty \int_0^t \big\| D v^{i}(s) \big\|_\infty\,\d s.
  \endaligned $$
Similarly to the calculations in \eqref{thm-main-result.0}, we can show that the right hand side vanishes as $n$ goes to infinity. Therefore we let $n\to \infty$ on both sides of \eqref{thm-main-result.1} and conclude that $u(t,x)$ satisfies \eqref{mild-sol} uniformly in $(t,x)\in [0,T]\times H$.

Finally we prove the uniqueness of solutions. Suppose $u(t,x)$ and $\tilde u(t,x)$ are two solutions to \eqref{KolE}  with the properties (a)--(c). Then, for any $t>0$ and $x\in H$,
  $$u(t,x) -\tilde u(t,x)= \int_0^t S_{t-s}\big(\<B, D(u(s) -\tilde u(s))\>\big)(x) \,\d s. $$
Therefore,
  \begin{equation}\label{thm-main-result.2}
  \aligned
  |u(t,x) -\tilde u(t,x)| &\leq \|B\|_\infty \int_0^t \|D u(s) - D\tilde u(s) \|_\infty\,\d s.
  \endaligned
  \end{equation}
Moreover, by Proposition \ref{derivative-formula},
  $$\aligned
  |D(u(t,x) -\tilde u(t,x))| &\leq \int_0^t \big| D S_{t-s}\big(\<B, D(u(s) -\tilde u(s))\>\big)(x)\big| \,\d s\\
  &\leq \int_0^t \big\| \<B, D(u(s) -\tilde u(s))\>\big\|_\infty \|\Lambda(t-s)\|_{\mathcal L(H)} \,\d s \\
  &\leq \|B\|_\infty \int_0^t \|D u(s) - D\tilde u(s) \|_\infty \|\Lambda(t-s)\|_{\mathcal L(H)} \,\d s.
  \endaligned $$
Hence,
  $$\aligned
  \int_0^t \|D u(s) - D\tilde u(s) \|_\infty \,\d s &\leq \|B\|_\infty \int_0^t\! \int_0^s \|D u(r) - D\tilde u(r) \|_\infty \|\Lambda(s-r)\|_{\mathcal L(H)} \,\d r\d s\\
  &= \|B\|_\infty \int_0^t \|D u(r) - D\tilde u(r) \|_\infty\, \d r \int_r^t \|\Lambda(s-r)\|_{\mathcal L(H)} \,\d s \\
  &\leq \bigg[\|B\|_\infty \int_0^t \|\Lambda(s)\|_{\mathcal L(H)} \,\d s \bigg]\! \int_0^t \|D u(r) - D\tilde u(r) \|_\infty\, \d r .
  \endaligned $$
Under Hypothesis \ref{hypothe}-(iv), there is some $t_1>0$ such that $\|B\|_\infty \int_0^{t_1} \|\Lambda(s)\|_{\mathcal L(H)} \,\d s <1$. Then
  $$\int_0^t \|D u(s) - D\tilde u(s) \|_\infty \,\d s =0 \quad \mbox{for all } t\leq t_1.$$
Combining this with \eqref{thm-main-result.2} we see that $u(t,x) = \tilde u(t,x)$ for any $(t,x)\in [0,t_1] \times H$. Next, by the semigroup property, it is easy to show that, for $t\in (0,t_1]$,
  $$\aligned
  u(t+t_1, x)= S_{t} u_{t_1}(x) + \int_0^t S_{t-s}\big(\<B, D u(t_1+ s)\> \big)(x) \,\d s.
  \endaligned$$
Repeating the above procedure we can prove the uniqueness on the interval $[t_1, 2t_1]$ and so on. Thus we complete the proof.
\end{proof}

\section{Numerical Simulations}
In this section we propose some experiment of the iteration scheme \eqref{new-iteration} studied in Section \ref{sec-theory} in the finite dimensional setting.
We have in mind the framework of Example \ref{ex:laplacian}, i.e. $A = \Delta$. Since we are in the finite dimensional setting this choice corresponds to take $A \in \R^{d}\otimes \R^{d}$  as the diagonal matrix where $A_{k,k} = -k^{2},\, k=1,\ldots, d$. Moreover we consider the matrix $ Q = \sigma^{2} I_{d\times d}$ where $I_{d\times d}$ is the identity matrix over $\R^{d}$, and the parameter $\sigma$ will be specified below (see Table \ref{tab:modelparameters} for reference parameters).

We will consider two main classes of examples as a benchmark for our approximation scheme.
First, we consider the nonlinear vector field
\begin{equation}\label{eq:sin}
B(x)_{i} = \sin(x_{i}),\quad i=1,\dots, d,
\end{equation}
i.e. we apply the sine function to all the components. This nonlinearity will be the easier one of our examples since it is close to linear, at least for small values of $x$. We will also consider some variation of the previous example, made by
\begin{equation}\label{eq:sinSkew}
B(x)_{i} = \sin(x_{i})(B_{m}x)_{i},\quad i=1,\dots, d,
\end{equation}
where $B_{m} \in \R^{d}\times \R^{d}$ is the skew symmetric matrix
\[
(B_{m})_{i,j} = \begin{cases}
1	&\text{ if } i<j;\\
-1 	&\text{ if } i>j;\\
0	&\text{ if } i=j,
\end{cases}
\]
i.e. the Toeplitz matrix with all one above the diagonal and minus one below.
This example is more complex than the previous one. It is significant since it deals with skew symmetric matrices, inducing rotations, which are a first simple step in the direction of fluid dynamics. The vector field $B_{m}x$ is also multiplied by the function $\sin(x)$ in order to make example \eqref{eq:sinSkew} nonlinear. Notice that this last example is not covered by our present theory, since it does not satisfy Hypothesis \ref{hypothe-1}. However, even if \eqref{eq:sinSkew} is not bounded, it satisfies a linear growth condition. We hope to improve our theory and the generality of the assumptions in such direction in a future research, and limit ourselves to some numerical experiments for the present work.

\begin{figure}[]
\centering
\includegraphics[width=\textwidth]{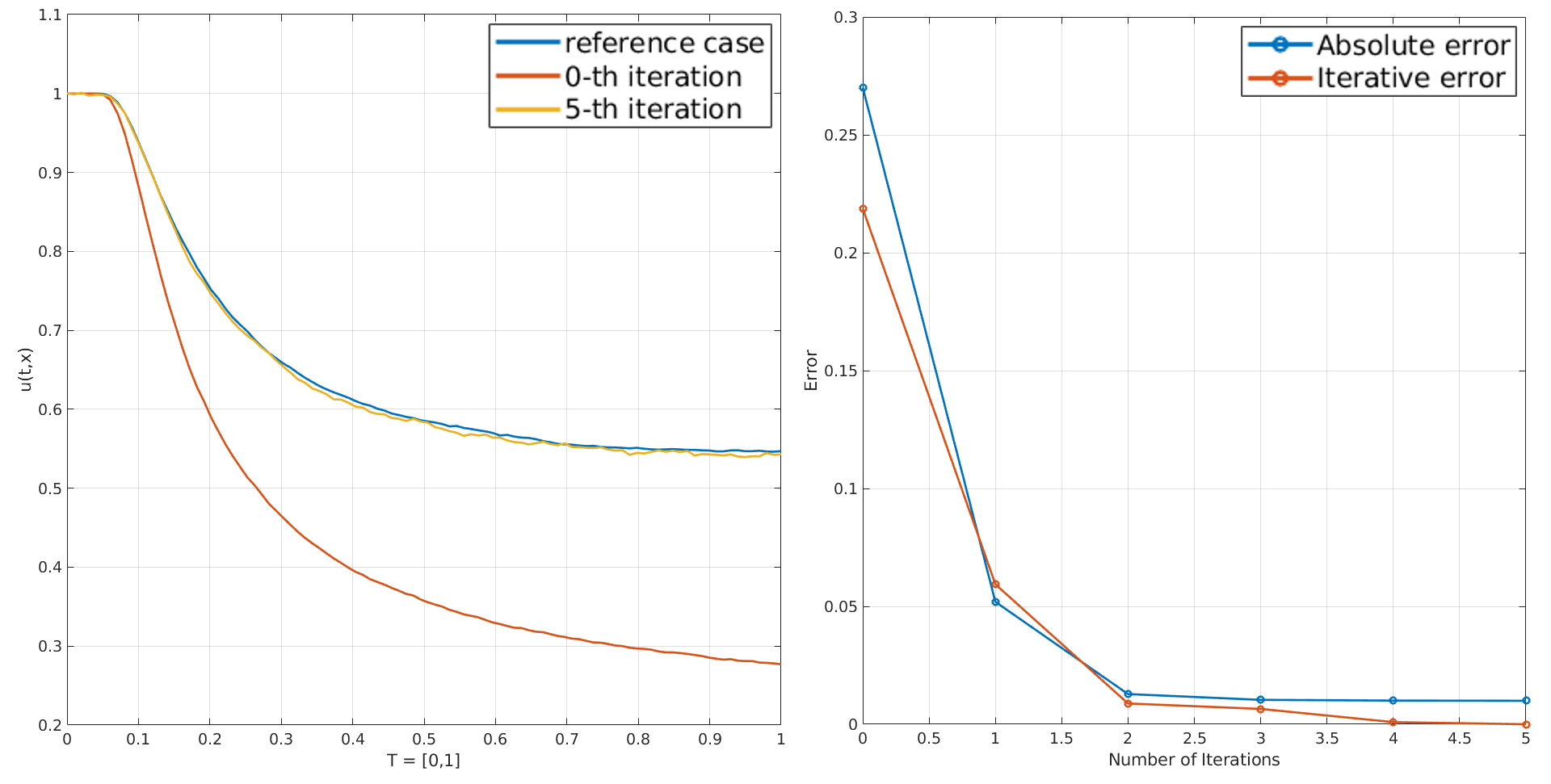}
\caption{Sine case \eqref{eq:sin}. Left: trajectory of $u(t,x)$ for $t \in [0,T],\, d = 10$. Right: difference between consecutive iterations and error with respect to the reference case, X-axis number of iterations.}
\label{fig:sinFull}
\end{figure}

Second, we consider the following class of polynomial nonlinearities
\begin{equation}\label{eq:polynomialbounded}
B(x)_{i} = \norm{\overline{y}}\frac{(\overline{y}_{i}-x_{i})\abs{\overline{y}_{i}-x_{i}}^{p-1}}{\norm{\overline{y}}+\norm{\overline{y}-x}^{p}},\quad i=1,\dots,d
\end{equation}
where $\overline{y} \in \R^{d}$ is fixed. Note that this example appeals to the one dimensional case
\[
B(x) = (\overline{y}-x)\abs{\overline{y}-x}^{p-1},
\]
for which the dynamical system
\[
\dot{x}(t) = B(x(t))
\]
has the singleton $\{\overline{y}\}$ as a stable attractor. The reason behind the example \eqref{eq:polynomialbounded} is the following:
it is close to a polynomial nonlinearity, so that it makes a significant test case; at the same time, the normalization by the factor $\norm{\overline{y}}/(\norm{\overline{y}}+\norm{\overline{y}-x}^{p})$ makes it a bounded operator, so that it fulfills Hypothesis \ref{hypothe-1}.

In all the examples above we adopt the following choice of initial condition
\[
u_{0}(x) = \mathds{1}_{\{\norm{x}\geq H\}},
\]
where the parameter $H$ is set to $1$ (see Table \ref{tab:modelparameters}).

\subsection{Approximation schemes}

\paragraph*{Standard Monte Carlo approach.} Since an explicit solution for Equation \eqref{KolEq} is not available we will always compare to the solution obtained by means of Monte Carlo simulation of the nonlinear process $X^{x}_{t}$:
\begin{equation}\label{eq:reference case}
u (  t,x )  =\mathbb{E}\left[  u_{0} (  X_{t}^{x} )
\right] \simeq \frac{1}{N_{s}} \sum_{i=1}^{N_{s}} u_{0}\big(X^{x,i}_{t} \big),
\end{equation}
where $N_{s}$ is the number of samples considered, and the processes $X^{x,i}_{t}, i=1,\dots,N_{s}$ are independent copies of $X^{x}_{t}$. To compute samples of the process $X^{x,i}_{t}$ we use the Euler-Maruyama scheme with a very fine time step in order to get a good approximation to be used as a comparison. The solution computed by \eqref{eq:reference case} will always be referred to in what follows as the reference case.

\begin{figure}[]
\centering
\includegraphics[width=\textwidth]{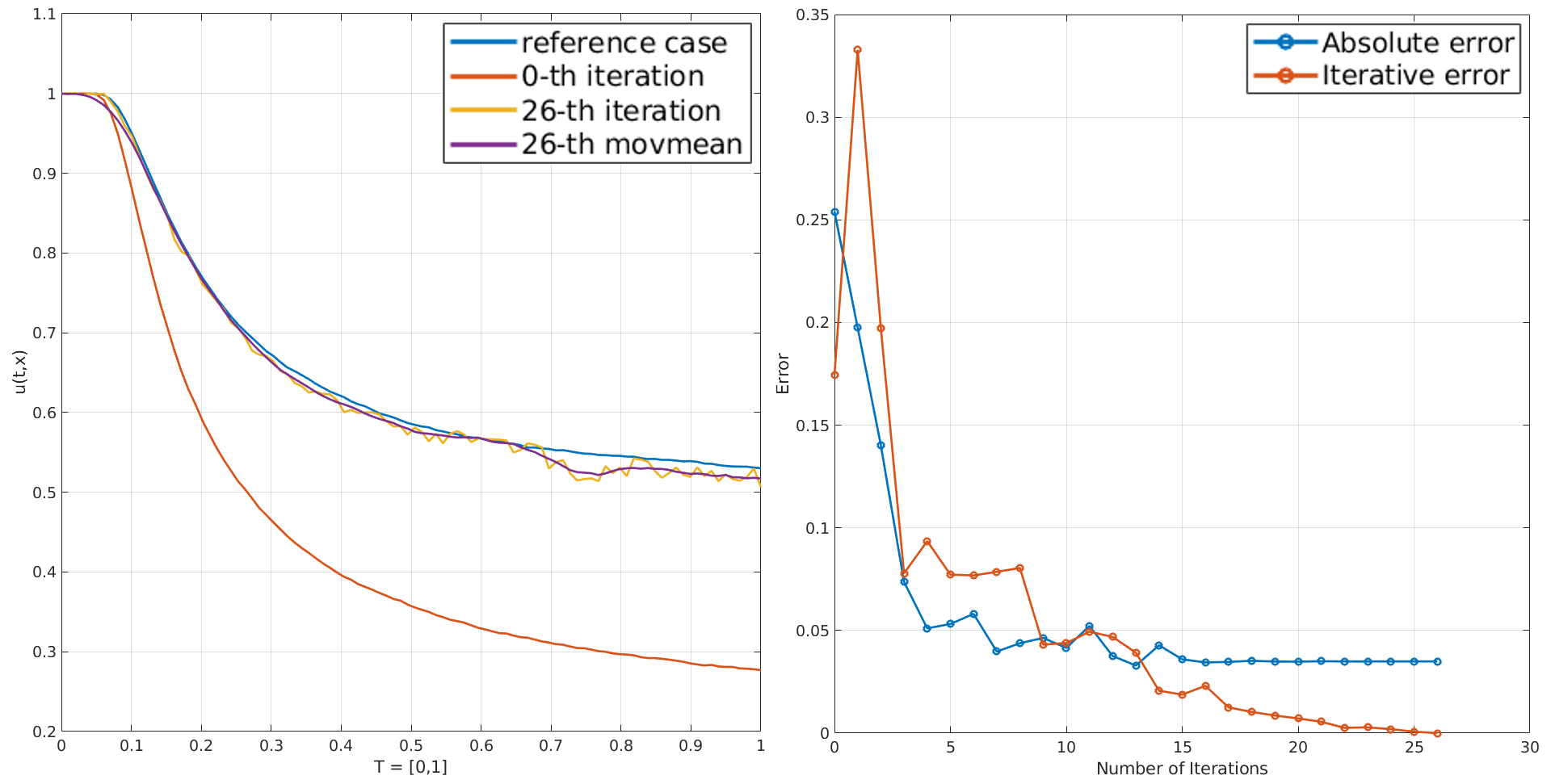}
\caption{Polynomial bounded quadratic case \eqref{eq:polynomialbounded} with $p=2$, $d = 10$. Left: trajectory of $u(\cdot,x)$ for $t \in [0,T]$. The purple line is obtained by applying a moving average smoothing to the last iteration. Right: difference between consecutive iterations and error with respect to the reference case, X-axis number of iterations.}
\label{fig:quadraticfull}
\end{figure}

\paragraph*{Numerical iteration scheme.} Under our assumption, since $A$ and $Q$ are diagonal, we can rewrite the equations for the processes $Z^{x}_{t}$ and $Z_{t}$ in a simple way: for $k=1,\ldots,d$,
\begin{equation}\label{eq:Znum}
\left\{ \aligned
\d Z^{k}_{t} &= -k^{2} Z^{k}_{t}\,\d t + \d W^{k}_{t},	\\
Z^{k}_{0} &= 0
\endaligned \right.
\end{equation}
and
\[
Z^{x,k}_{t} = e^{-k^{2}t}x_{k} + \sigma Z^{k}_{t}.
\]
We remark that, differently from $Z_{t}$, the process $Z^{x}_{t}$ depends also on the parameter $\sigma$, but we do not explicitly write $Z^{x,\sigma}_{t}$ for ease of notation. Note that the process $Z_{t}$ depends only on the operators $A$. This opens the possibility of computing $Z^{x}_{t}$, and hence also $u(t,x)$, for many values of $x$ without repeating the computations for $Z_{t}$. The same reasoning holds for different values of $\sigma$, see Figure \ref{fig:longsigma}. Note also that this strategy cannot be applied to the process $X_{t}^{x}$ since in that case the problem is nonlinear.

Once realizations of the process $Z^{x}_{t}$ are computed, we can proceed with the iteration algorithm \eqref{new-iteration}. In order to compute numerically the quantity $v^{n}(t,x)$ appearing in Theorem \ref{thm-iteration} one needs to be able to compute first
\begin{equation}\label{eq:integrand}
\left\langle \Lambda\left(  s\right)  B\left(  Z_{t-s}^{x}\right)
,Q_{s}^{-1/2}\left(  Z_{t}^{x}-e^{sA}Z_{t-s}^{x}\right)  \right\rangle.	
\end{equation}
Since $A$ and $Q$ are diagonal and explicit (see the beginning of this section), one has
\[
\left(  Q_{t}\right)  _{k,k}=\int_{0}^{t}\left(  e^{sA}\right)  _{k,k}%
Q_{k,k}\big(  e^{sA^{\ast}}\big)_{k,k}\,\d s=\int_{0}^{t}e^{-2sk^{2}%
}\sigma^{2}\,\d s=\frac{\sigma^{2}}{2k^{2}}\big(  1-e^{-2tk^{2}}\big),
\]%
\[
\big(  Q_{t}^{-1/2}\big)  _{k,k}=\frac{\sqrt{2}k}{\sigma%
\sqrt{1-e^{-2tk^{2}}}}, \quad
\left(  \Lambda\left(  t\right)  \right)  _{k,k}=\frac{\sqrt{2%
}ke^{-tk^{2}}}{\sigma\sqrt{1-e^{-2tk^{2}}}},
\]
and thus, 
\begin{align*}
&\, \left\langle \Lambda\left(  s\right)  B\left(  Z_{t-s}^{x}\right)
,Q_{s}^{-1/2}\left(  Z_{t}^{x}-e^{sA}Z_{t-s}^{x}\right)  \right\rangle \\
=&\, \sum_{k=1}^{d} \frac{2k^{2}e^{-sk^{2}}}{\sigma^{2}(1-e^{-2sk^{2}})}B\left(
Z_{t-s}^{x}\right)_{k}  \left(  Z_{t}%
^{x,k}-e^{-sk^{2}}Z_{t-s}^{x,k}\right) .
\end{align*}

\begin{figure}[]
\begin{subfigure}{0.49\textwidth}
\includegraphics[width=\textwidth]{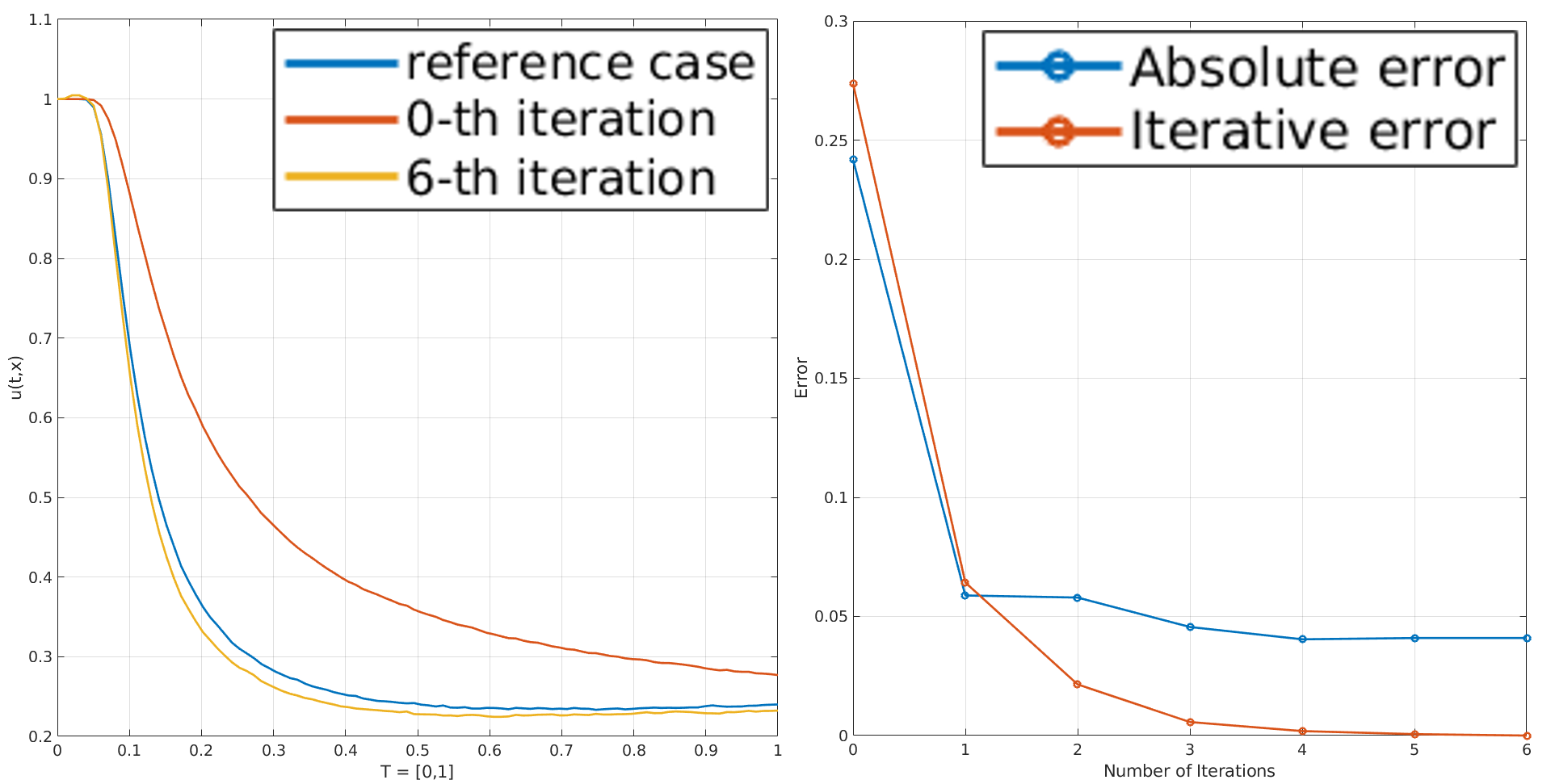}
\label{fig:sinBmFull}
\end{subfigure}
\begin{subfigure}{0.49\textwidth}
\includegraphics[width=\textwidth]{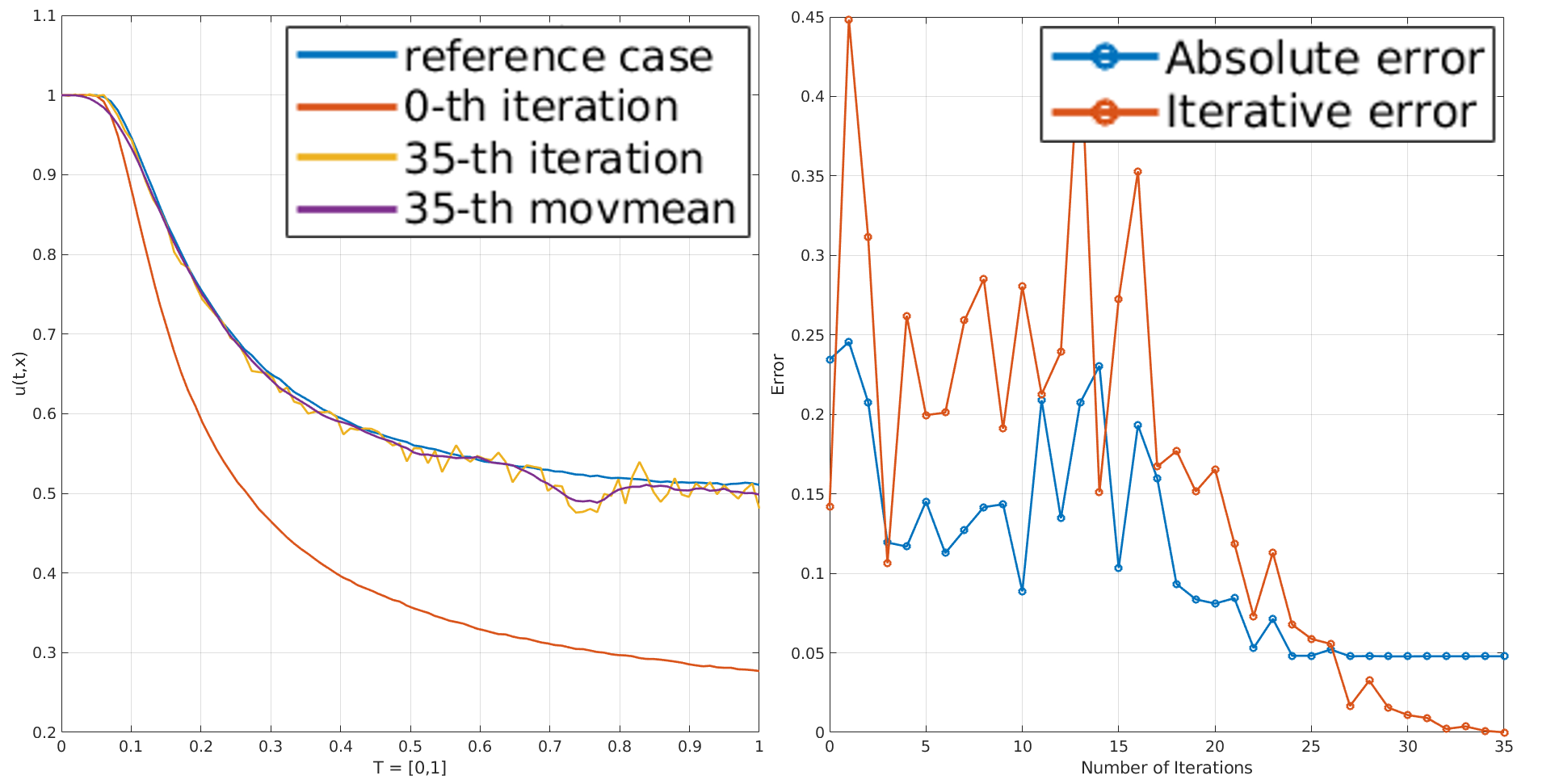}
\label{fig:cubicFullBad}
\end{subfigure}
\caption{Left block: Sine times skew-symmetric case \eqref{eq:sinSkew} with $d = 10$. Right block: Polynomial bounded cubic case \eqref{eq:polynomialbounded} $p=3$, $d = 10$. The purple line is obtained by applying a moving average smoothing to the last iteration.}
\label{fig:sinBmFull+cubicFullBad}
\end{figure}
Hence, when integrating expression \eqref{eq:integrand}, by change of variable we have
\begin{align}
&\, \int_{0}^{t}\left\langle \Lambda\left(  s\right)  B\left(  Z_{t-s}^{x}\right)
,Q_{s}^{-1/2}\left(  Z_{t}^{x}-e^{sA}Z_{t-s}^{x}\right)  \right\rangle\,\d s \nonumber  \\
=&\, \int_{0}^{t}\left\langle \Lambda\left(  t-s\right)  B\left(  Z_{s}^{x}\right)
,Q_{t-s}^{-1/2}\left(  Z_{t}^{x}-e^{(t-s)A}Z_{s}^{x}\right)  \right\rangle \,\d s \nonumber  \\
=&\, \int_{0}^{t}\sum_{k=1}^{d} \frac{2k^{2}e^{-(t-s)k^{2}}}{\sigma^{2}(1-e^{-2(t-s)k^{2}})}B\left(
Z_{s}^{x}\right)_{k}  \left(  Z_{t}%
^{x,k}-e^{-(t-s)k^{2}}Z_{s}^{x,k}\right)\,\d s.  \label{eq:singleint}
\end{align}
Changing variable provides a significant advantage when performing numerical integration. In fact it is more complex to compute $Z_{t-s}^{x}$ than $\Lambda(t-s)$ \big(resp. $Q_{t-s}^{1/2}$\big) since $Z^{x}$ is random and hence we would have been obliged to reverse the time for every sample of the process. On the other hand the matrix $\Lambda(t-s)$ \big(resp. $Q_{t-s}^{1/2}$\big) is deterministic so that changing time $s\mapsto t-s$ can be done only once.

Moreover, thanks to Corollary \ref{cor-iteration-new}, it is possible to compute $v^{n}(t,x)$ with a single time integration from the previous step.
Introduce
\[
I^{n}(t,x) = \int_{0}^{t} \d r_n \int_0^{r_n} \d r_{n-1} \cdots \int_{0}^{r_2} \d r_1
   \ \prod_{i=1}^n \Big\<\Lambda(r_{i+1}- r_{i}) B\big(Z_{r_i}^x \big), Q_{r_{i+1}- r_{i}}^{-1/2} \big( Z_{r_{i+1}}^x - e^{(r_{i+1}- r_{i}) A} Z_{r_i}^x \big) \Big\>
\]
and notice that, due to Equation \eqref{iteration-new}, we have
\[
v^{n}(t,x) = \E\big[u_0( Z_t^x) I^{n}(t,x) \big].
\]
Since
\[
I^{n+1}(t,x) = \int_{0}^{t}\left\langle \Lambda\left(  t-s\right)  B\left(  Z_{s}^{x}\right)
,Q_{t-s}^{-1/2}\left(  Z_{t}^{x}-e^{(t-s)A}Z_{s}^{x}\right)  \right\rangle I^{n}(s,x)\,\d s,
\]
once we have computed $I^{n}$, computing $I^{n+1}$ is a matter of a single integration. This is really crucial because, otherwise, by using the direct expression \eqref{iteration} in Theorem \ref{thm-iteration}, to compute $v^{n}(t,x)$ one should have done an $n$-dimensional numerical integration, independently on the previous iteration.

\paragraph*{Stopping conditions.}
Since the numerical scheme is iterative and since an exact solution is not available, we adopt a consecutive-iterations stopping condition. At every step we measure the difference between consecutive iterations and stop when this difference is below a certain threshold $tol$. Specifically we adopt two strategies in different situations: when we compute the entire trajectory of $u(t,x)$ for $t \in [0,T]$, we measure
\[
err(n) := \sup_{t\in [0,T]}\norm{v^{n}(t,x)}
\]
and stop the iterations if $err(n) < tol$ (see Figures \ref{fig:sinFull} and \ref{fig:quadraticfull}); when we are interested only in $u(T,x)$ for a fixed $T$, then
\[
err(n) := \abs{v^{n}(T,x)}
\]
and adopt the same stopping rule (Figures \ref{fig:longsigma} and \ref{fig:sigmaQuadraticBad+changeX}).

The entire procedure  can be summarized in the following scheme:\newline\newline
\begin{algorithm}[H]
\SetAlgoLined
\KwResult{$u^{n}(t,x)$ approximating solution after $n$ iterations}
 Compute $N_{s}$ samples of the process $Z_{t}$\;
 Compute $N_{s}$ samples for $Z^{x}_{t}$ starting from $Z_{t}$\;
 Compute $u^{0}(t,x)= v^0(t,x)= \E[u_{0}(Z^{x}_t)]$ by Monte Carlo average\;
 Set $err = 1$, $n = 0$\;
 \While{$err > tol$}{
 	Compute $v^{n+1}(t,x)$ as in Corollary \ref{cor-iteration-new}\;
 	Set $u^{n+1}(t,x) = u^{n}(t,x) +  v^{n+1}(t,x)$ \;
 	Set $err = \abs{v^{n+1}(t,x)}$\;
 	Set $n = n+1$\;
} 	
 \caption{Iteration Scheme}
\end{algorithm}

\subsection{Examples}
Here we collect the results obtained, and all the parameters involved in the simulations. Parameters are divided into two categories: those related to the mathematical problem, and those strictly related to the numerical approximations, see Tables \ref{tab:modelparameters} and \ref{tab:numericalparameters}. Those are our reference parameters: we will specify each time any modifications.

In all the figures below, when showing the entire trajectory of the solution $u(t,x)$ for $t \in [0,T]$, we also plot the $0$-th order iteration. This corresponds to the solution of the linear case for \eqref{KolEq}, i.e. the Kolmogorov equation with $B\equiv 0$. This will allow us to compare with the linear case, in order to be sure to have introduced a significant nonlinearity into the problem.

\begin{table}[]
\centering
\begin{minipage}{0.49\textwidth}
\resizebox{\textwidth}{!}{%
\begin{tabular}{rll}
\multicolumn{1}{c}{\textbf{Parameter}} & \multicolumn{1}{c}{\textbf{Value}} & \multicolumn{1}{c}{\textbf{Description}}                \\ \hline
$d$                                    & $10$                               & dimension of the problem
 \\ \hline
$y_0$                                  & $2\mathbf{e}$                      & parameter of the nonlinearity $B$, Polynomial case                      \\ \hline
$x$                                  & $\mathbf{e}$                       & values where the solution $u(t,x)$ is computed        \\ \hline
$\sigma$                               & $1$                                & noise                                                   \\ \hline
$T$                                    & $1$                                & final time of computation for $u(t,x_0)$                \\ \hline
$H$                                    & $1$                                & threshold for the initial condition $u_0(x)$            \\ \hline
\end{tabular}%
}
\caption{Model parameters, $\mathbf{e}$ stands for the vector with all components identically $1$.}
\label{tab:modelparameters}
\end{minipage}
\begin{minipage}{0.49\textwidth}
\resizebox{\textwidth}{!}{%
\begin{tabular}{rll}
\multicolumn{1}{c}{\textbf{Parameter}} & \multicolumn{1}{c}{\textbf{Value}} & \multicolumn{1}{c}{\textbf{Description}}        \\ \hline
$\Delta t$                            & $10^{-4}$                            & time step for Euler scheme                      \\ \hline
$dt$                                  & $10^{-2}$                            & time step for numerical integration             \\ \hline
$N_s$                                 & $10^5$                             & number of samples averages \\ \hline
$tol$                                 & $10^{-3}$                            & tolerance for stopping iterations \\ \hline
\end{tabular}%
}
\caption{Numerical parameters.}
\label{tab:numericalparameters}
\end{minipage}
\end{table}

\paragraph*{Mixed-time-step strategy.} To perform numerical simulation of SDEs and numerical integration we adopt a mixed-time-step strategy. When we compute the reference solution, through the simulation of the process $X^{x}_{t}$, as well as when computing samples of the linear process $Z_{t}$, we adopt a time step $\Delta t$. On the other hand when we perform numerical integration, to compute successive iterations, we adopt a time discretization parameter $dt \gg \Delta t$, see Table \ref{tab:numericalparameters}. This is due to the fact that, in equation
\eqref{eq:X^x_t}, as well as in \eqref{eq:Znum}, a coefficient $-k^{2}$ is present in the $k$-th component of the drift of the equation. This coefficient, and hence the Lipschitz constant of the drift, is growing as the square of the dimension $d$ of the problem. This is caused by the intrinsic exponential decay of equation \eqref{eq:Znum}, which require a high level of precision in computation. Differently, in equation \eqref{eq:singleint}, part of this exponential decay is absorbed by the convolutional structure of the integration. The limits and what is the proper ratio between $\Delta t$ and $dt$ is a difficult topic. A more precise investigation is needed: for the present paper we only highlight the numerical result obtained, and hope to improve the theoretical counterpart in a future work.

\begin{figure}[t]
\begin{subfigure}{0.49\textwidth}
\includegraphics[width=\textwidth]{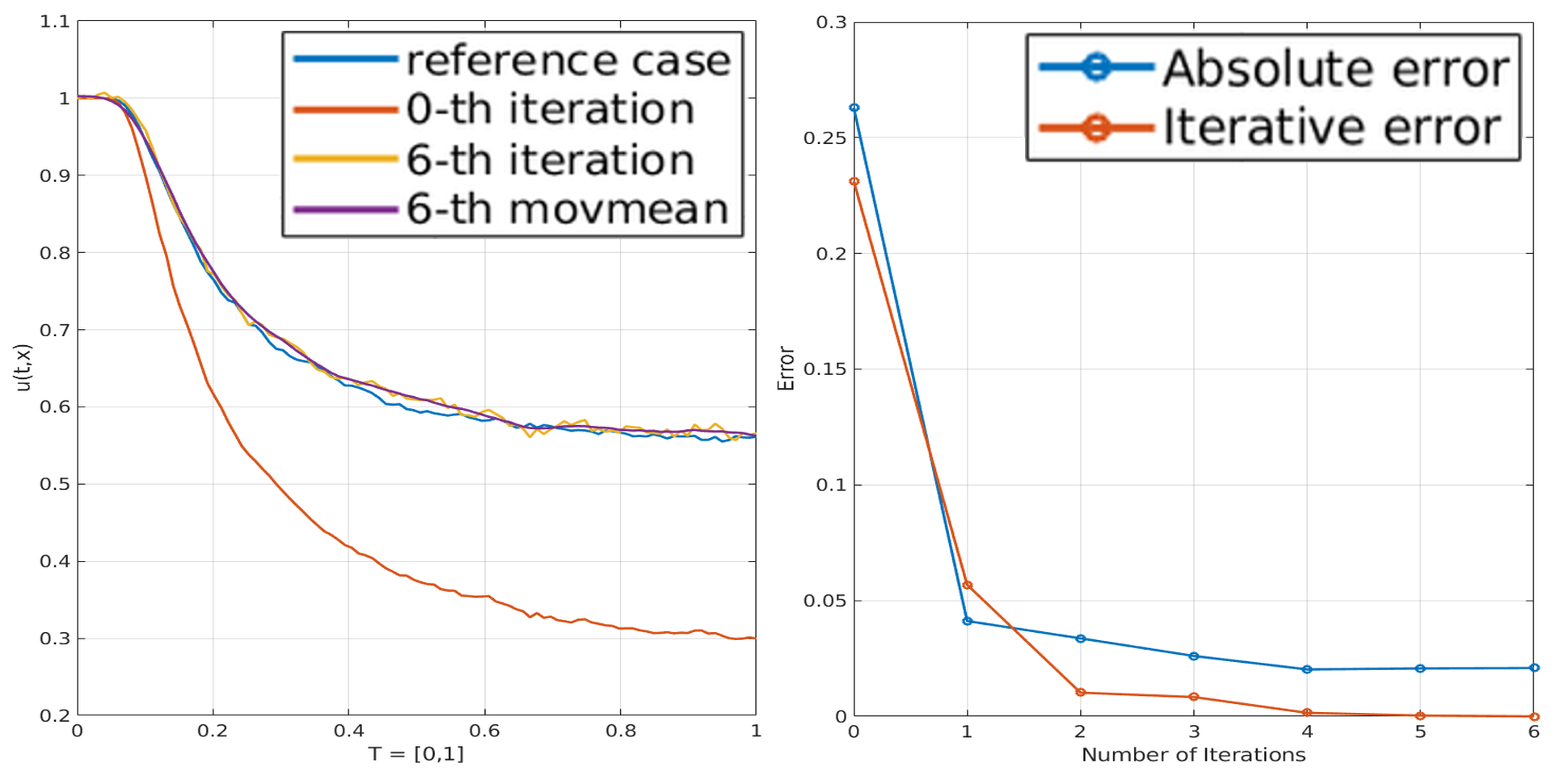}
\label{fig:sinD50}
\end{subfigure}
\begin{subfigure}{0.49\textwidth}
\includegraphics[width=\textwidth]{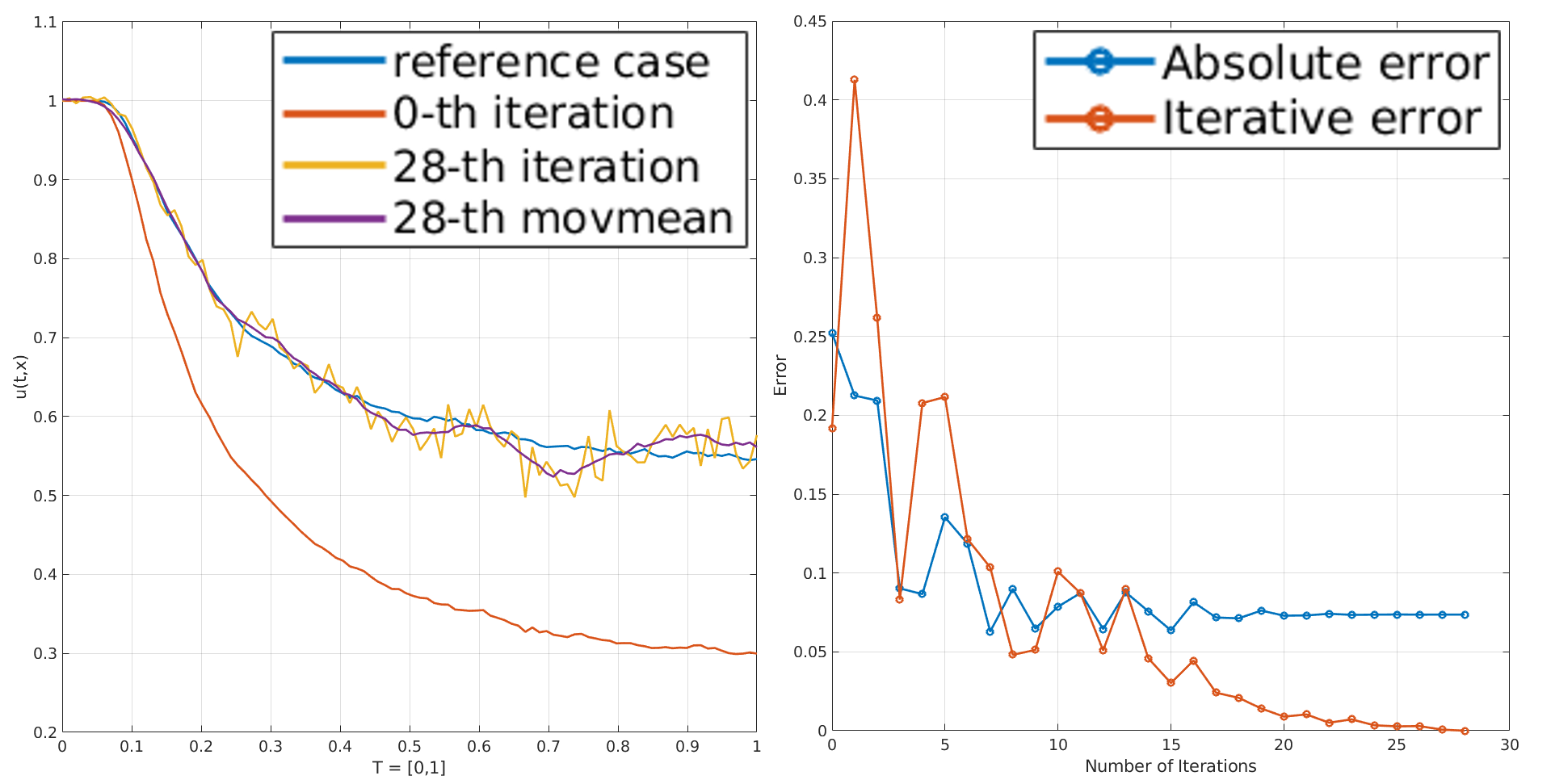}
\label{fig:QuadraticD50}
\end{subfigure}
\caption{Left block: Sine  case \eqref{eq:sin} in dimension $d = 50$, $N_{s} = 10^{4}$. The purple line is obtained by applying a moving-average smoothing to the last iteration. Right block: Polynomial bounded quadratic case \eqref{eq:polynomialbounded} $p=2$ in dimension $d = 50$, $N_{s} = 10^{4}$.}
\label{fig:sinD50+QuadraticD50}
\end{figure}
\begin{figure}[t]
\begin{subfigure}{0.49\textwidth}
\includegraphics[width=\textwidth]{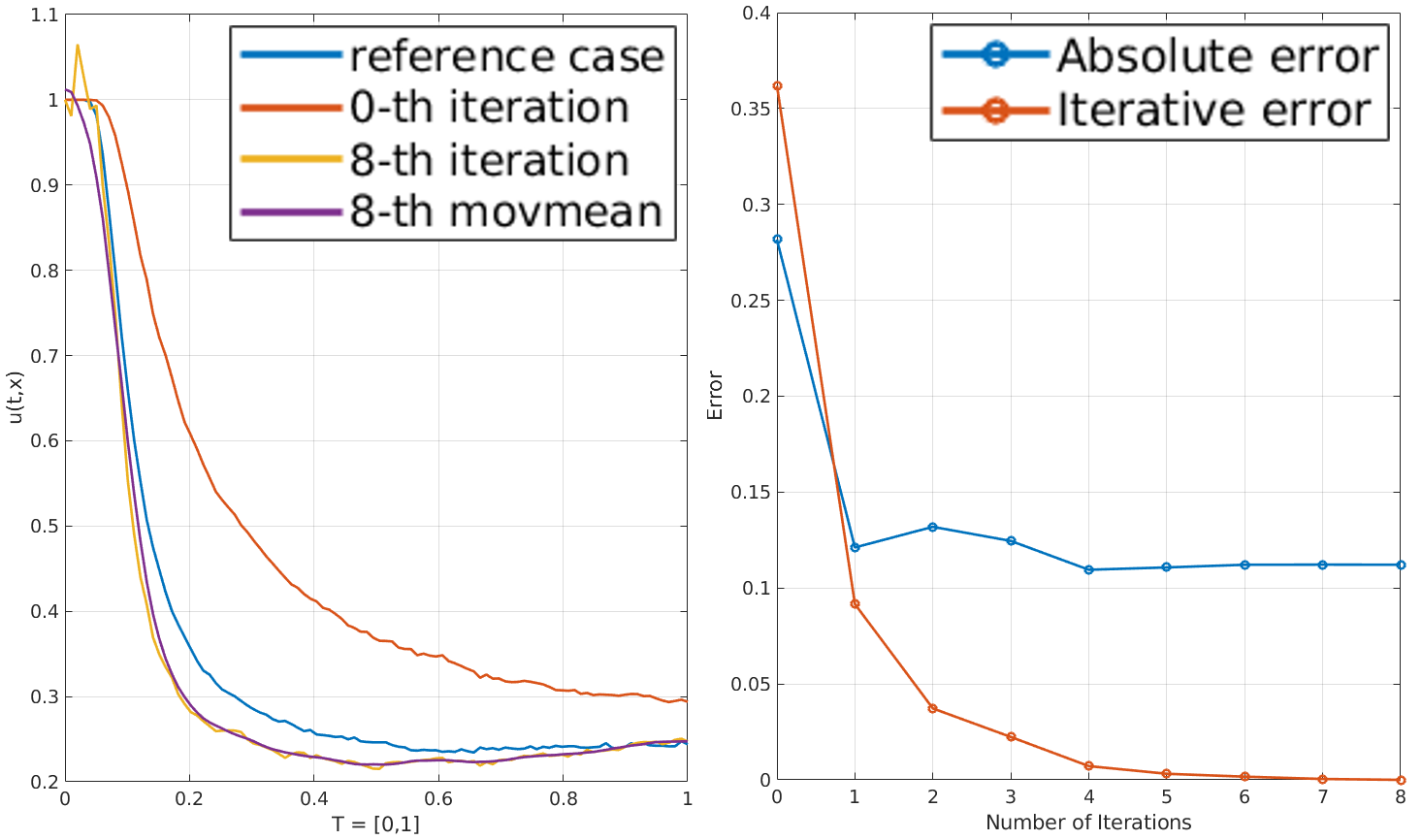}
\label{fig:sinBmD20}
\end{subfigure}
\begin{subfigure}{0.49\textwidth}
\includegraphics[width=\textwidth]{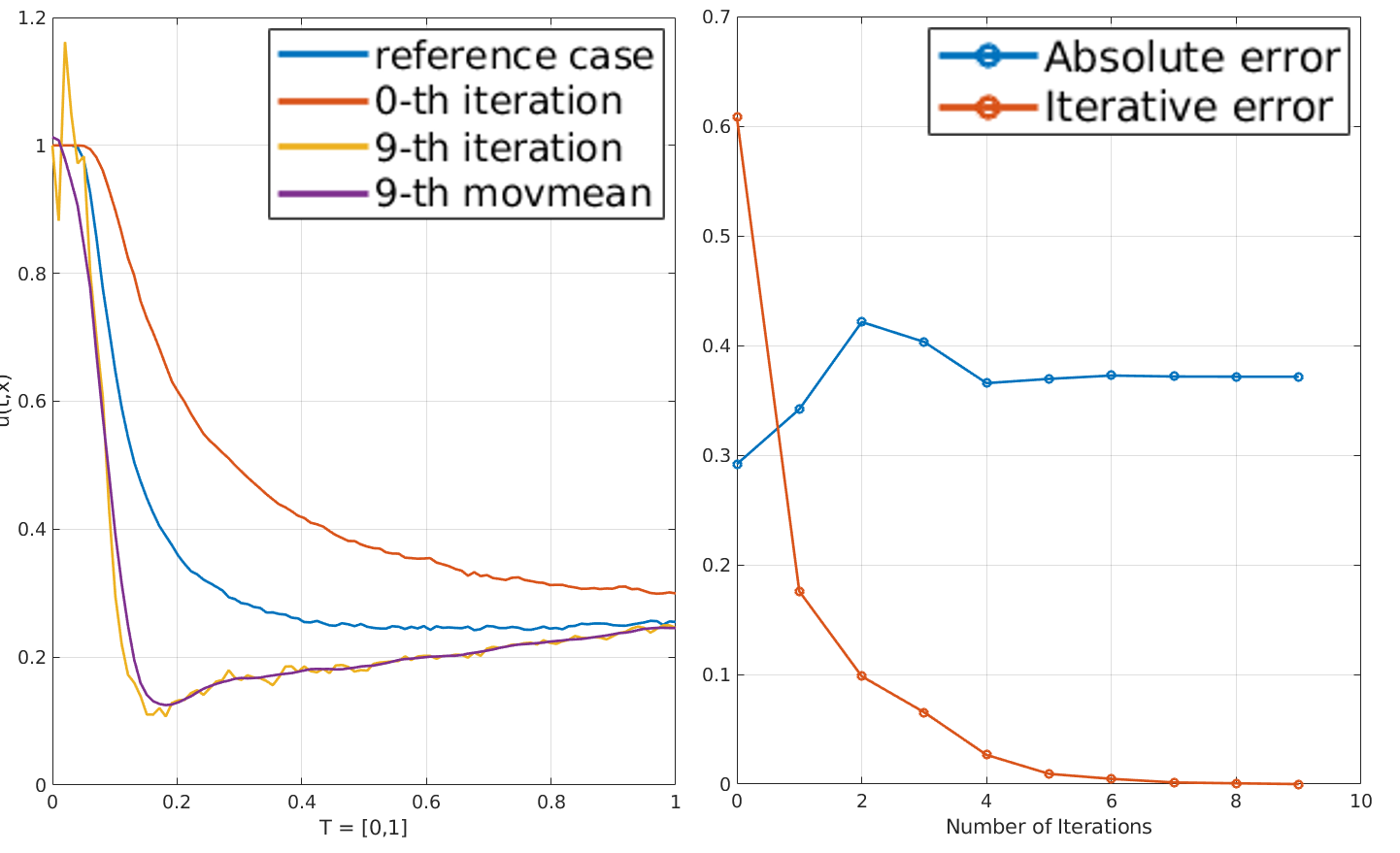}
\label{fig:sinBmD50}
\end{subfigure}
\caption{Sine times skew symmetric matrix \eqref{eq:sinSkew}. Left dimension $d = 20$, $N_{s} = 10^{4}$. Right dimension $d = 50$, $N_{s} = 10^{4}$. }
\label{fig:sinBmD20+sinBmD50}
\end{figure}

\paragraph*{Positive results.} For the simpler test case, the sine case \eqref{eq:sin}, see Figure \ref{fig:sinFull}, convergence is obtained in five iterations.  This is due to the simplicity of the example, as $\sin(x)$ is almost linear near the origin. The situation is different when dealing with some more concrete examples like the polynomial case. In Figure \ref{fig:quadraticfull}, where we use formula \eqref{eq:polynomialbounded} with $p=2$, we see that the number of iterations to convergence is much bigger (26 in our example). At the same time the difference between the last iteration and the reference case is quite small, comparable to the sine case. However, we notice that the oscillation of the solution computed via our iteration scheme, related to the variance of the estimator, is a bit bigger than that of the reference case. This discrepancy is not completely clear yet, even if we expect it to be due to the low number of samples used to compute averages. In Figure \ref{fig:quadraticfull} we also add a moving-average smoothing of the solution, to make more perceivable this last intuition.

The same behavior is obtained in the variations of the previous examples. In Figure \ref{fig:sinBmFull+cubicFullBad} we see that the same fast convergence as in the sine case, is obtained also in the sine times skew-symmetric case \eqref{eq:sinSkew}. The Polynomial cubic case \eqref{eq:polynomialbounded} with $p=3$ has the same level of complexity as the case with $p=2$, even if it requires a higher number of iterations to obtain convergence, and presents the same type of oscillations.

We also perform the same tests in much higher dimension. In Figure \ref{fig:sinD50+QuadraticD50} we show the results of the same examples, performed in dimension $d = 50$ with $N_{s} = 10^{4}$ samples. We see that the number of iterations required to convergence are comparable with result in $d = 10$: this confirms the estimate of Corollary \ref{cor-estimate} which is in the infinite dimensional framework and hence is independent of any dimension. The small variations in the number of iterations, as well as the slight increase of the oscillations in the quadratic case, can be explained by the reduction in the number of samples used to compute empirical averages. It is also important to remark that, in the current example, the estimate of Corollary \ref{cor-estimate} is still too rough: by computing the right-hand side of \eqref{eq:cor-estimate} one finds that the number $n$ of iterations needed to have $|v^{n}(t,x)| < tol$ is far bigger than what we find in the numerical test (in fact it should be bigger than one hundred).

\begin{figure}[]
\centering
\includegraphics[width=\textwidth]{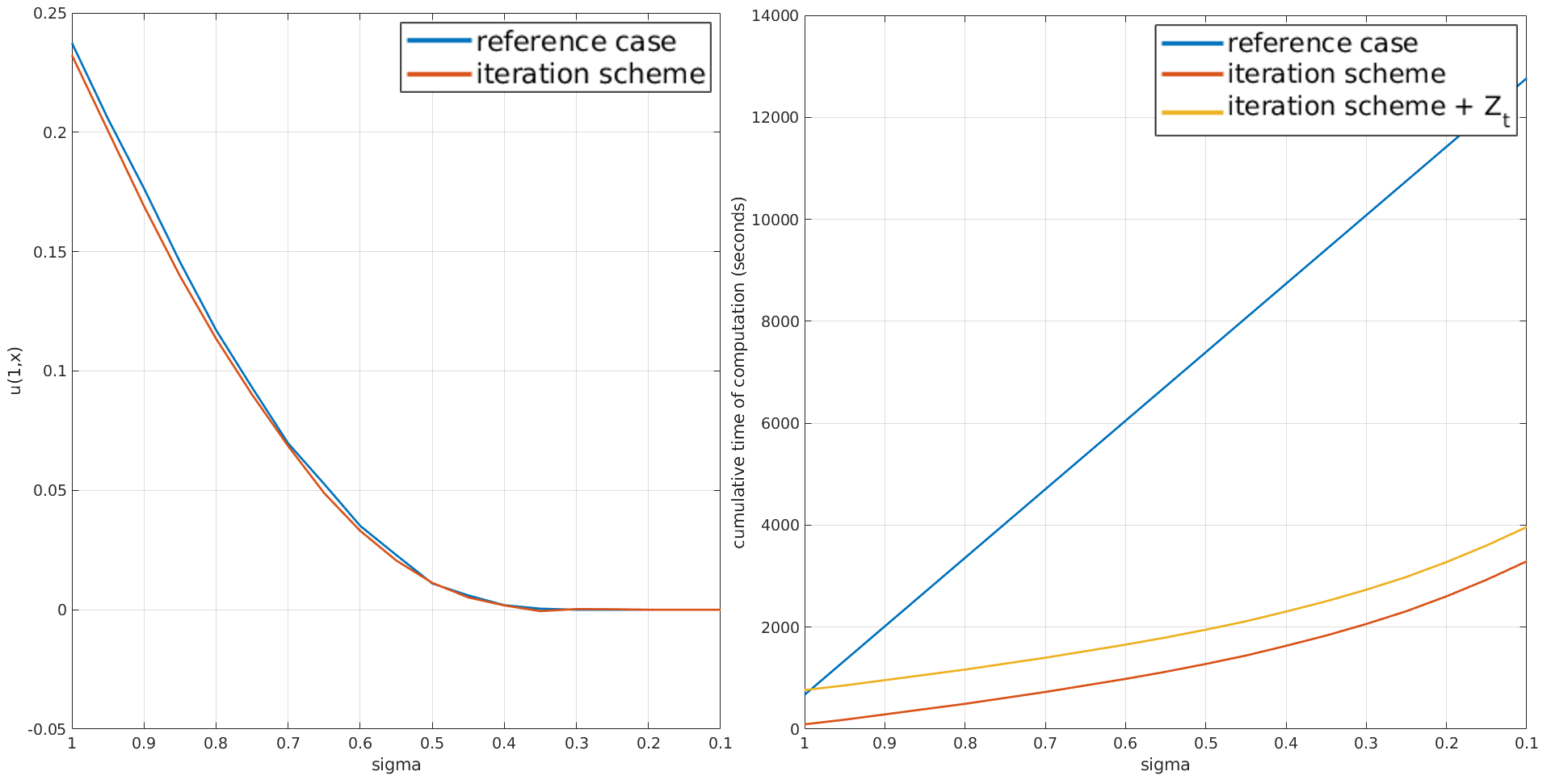}
\caption{Sine times skew symmetric matrix \eqref{eq:sinSkew}, $d=10$. Left: Y-axis value of $u(1,x)$ for different values of $\sigma$. X-axis different values of $\sigma$ in the reverse order. Right: Y-axis  computational time measured in seconds to compute the solution $u(1,x)$ for various values of $\sigma$. The measurement of time is cumulative: we give the cost of computing $u(1,x)$ for several values of $\sigma$, starting from $\sigma = 1$ in decreasing order. X-axis different values of $\sigma$ in the reverse order. The red line refers only to the time to compute iterations. The yellow line includes also the time to compute samples of $Z_{t}$ one time at the beginning of the simulation.}
\label{fig:longsigma}
\end{figure}

In Figure \ref{fig:longsigma} we followed a different approach: we fix the test case as the sine times skew-symmetric matrix \eqref{eq:sinSkew}, and analyze what is the limit of $u(1,x)$ as $\sigma$ goes to zero. Also in this case the solution computed through the iteration scheme is quite close to the reference case. At the same time, on the right side of Figure \ref{fig:longsigma}, we can appreciate the great advantage in time-saving of the iteration scheme. We remark that the plot on the right side is cumulative, meaning that it takes into account the time spent to compute the solution multiple times. In particular, we note that the reference case is a straight line, since the computational time does not depend on the different values of $\sigma$. On the other hand, for the iteration scheme there is a change in the number of iterations for different values of $\sigma$ that justifies the nonlinear shape. Moreover we see that, even including the time of computing samples of the process $Z_{t}$ that can be done only once (since $\sigma$ does not appear in \eqref{eq:Znum}), we still have a great advantage in time.

As remarked in the introduction, this kind of advantage is a main feature of the new method proposed here and applies also to the variation of other parameters than $\sigma$. In particular, it applies to the change of initial conditions $x$, one of the most fundamental problems in weather and climate prediction, related to the ensemble forecasting method, see \cite[Chapter 6]{Kalnay}. Again, Monte Carlo pays linearly with the number of variations of $x$, while our method pays the bulk (i.e. $Z_t$ in \eqref{eq:Znum}) only once and then (here for the initial conditions) roughly linearly in the number of different $x$'s, but with a linear slope much smaller than the one of Monte Carlo, similarly to the initial slope of Figure \ref{fig:longsigma} right side. We illustrate the interest in varying $x$ by Figure \ref{fig:sigmaQuadraticBad+changeX} right side, where it is illustrated the relative importance of different variations.

\begin{figure}[t]
\begin{subfigure}{0.49\textwidth}
\includegraphics[width=\textwidth]{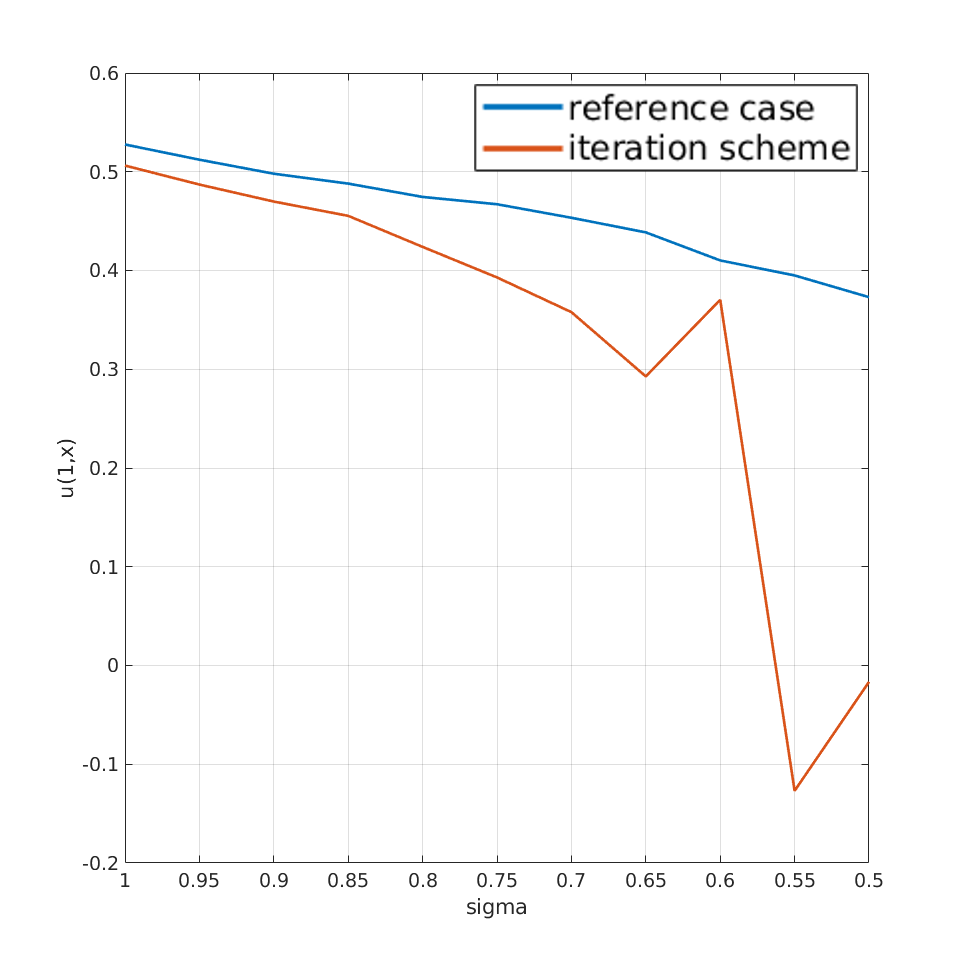}
\label{fig:sigmaQuadraticBad}
\end{subfigure}
\begin{subfigure}{0.49\textwidth}
\includegraphics[width=\textwidth]{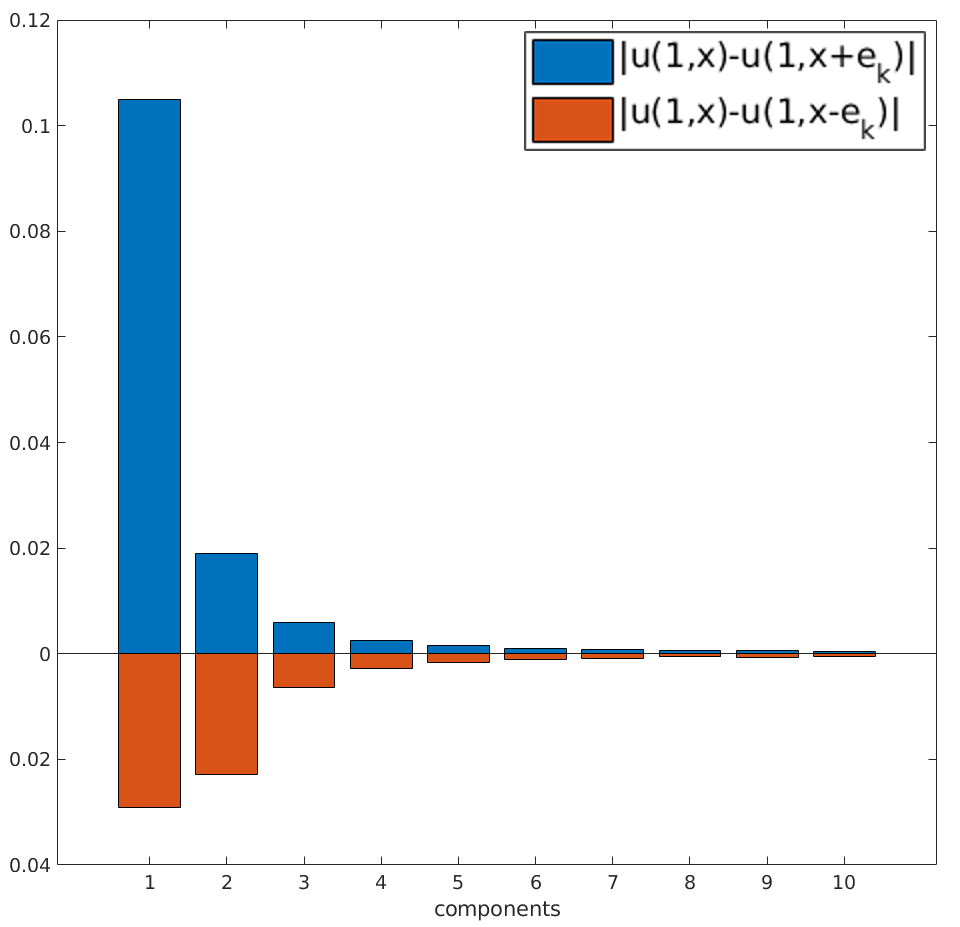}
\label{fig:changeX}
\end{subfigure}
\caption{Polynomial quadratic bounded case \eqref{eq:polynomialbounded}, $p=2$, $d = 10$. Left: Y-axis Value of $u(1,x)$ computed by the iteration scheme, with different values of $\sigma$. X-axis Different values of $\sigma$ in the reverse order. Right: Sine times skew symmetric matrix \eqref{eq:sinSkew}, $d = 10$. Difference of $u(1,x)$ with respect to $u(1, x \pm e_{k})$ for $k = 1,\dots, 10$. Blue positive values are obtained by comparing with $u(1,x+e_{k})$, orange negative by comparing with $u(1,x-e_{k})$. X-axis different values of $k = 1,\dots, 10$.}
\label{fig:sigmaQuadraticBad+changeX}
\end{figure}

\paragraph*{Difficulties with small $\sigma$ and high dimension.}
However, not every situation is well behaved as those presented above: in Figure \ref{fig:sigmaQuadraticBad+changeX} left side, we present the plot for different values of $\sigma$ in the polynomial quadratic case. Here the approximation tends to degenerate for smaller values of $\sigma$ (already around $0.5$). This is due to the higher level of nonlinearity of the polynomial case with respect to \eqref{eq:sinSkew}. It is also important to mention that the number of iterations to convergence is really important for what concerns the computational time. In the polynomial quadratic (and also cubic) case, since the number of iterations to convergence is much higher than in the simpler case, the advantage in the computational time is less relevant.  Still for what concerns negative results we also show in Figure \ref{fig:sinBmD20+sinBmD50} that, when the dimension grows (left $d = 20$, right $d=50$), the sine times skew-symmetric case \eqref{eq:sinSkew} tends to degenerate. Iterations are still converging but the limit is far form the reference solution. This is definitively the most difficult of our examples since it is the only one which mixes strongly all the components and produces a strong energy flux between them. However we also remark that at present time this case is not covered by our theory, but is still relevant since it has the rotational behavior which appeals to fluid dynamics.

\end{document}